\documentclass{article}
\usepackage{geometry}
\usepackage{amsmath}
\usepackage{latexsym}
\usepackage{amsthm}
\usepackage{amsfonts}
\usepackage{amssymb}
\usepackage{tikz}
\usepackage{tikz-cd}
\usepackage{gensymb}
\usepackage{cancel}
\usepackage{setspace}
\usepackage{mathtools}
\usepackage{framed}
\usepackage{tgbonum}
\usepackage{hyperref}
\usepackage{graphicx}
\usetikzlibrary{decorations.pathreplacing,angles,quotes}
\date{}
\author{Adam Afandi}
\title{An Ehrhart Theory For Tautological Intersection Numbers}

\newtheorem{Definition}{Definition}
\newtheorem{Proposition}{Proposition}
\newtheorem{Lemma}{Lemma}
\newtheorem{Theorem}{Theorem}
\newtheorem{Corollary}{Corollary}
\newtheorem{Example}{Example}
\newtheorem{Remark}{Remark}
\newtheorem{Open Problem}{Open Problem}

\newtheorem{Question}{Question}

\newcommand{\mbar}{\overline{\mathcal{M}}}

\begin{document}
\maketitle

\begin{abstract}

We discover that tautological intersection numbers on $\mbar_{g, n}$, the moduli space of stable genus $g$ curves with $n$ marked points, are evaluations of Ehrhart polynomials of partial polytopal complexes. In order to prove this, we realize the Virasoro constraints for tautological intersection numbers as a recursion for integer-valued polynomials. Then we apply a theorem of Breuer that classifies Ehrhart polynomials of partial polytopal complexes by the nonnegativity of their $f^*$-vector. In dimensions 1 and 2, we show that the polytopal complexes that arise are \emph{inside-out polytopes} i.e. polytopes that are dissected by a hyperplane arrangement.

\end{abstract}

{\small\tableofcontents}

\section{Introduction}
The purpose of this paper is to present a novel perspective concerning tautological intersection numbers on $\mbar_{g, n}$, the moduli space of stable $n$-pointed genus $g$ curves. This perspective uses ideas from a (seemingly) distant subfield of mathematics, namely, \emph{Ehrhart theory}. 

The main idea behind the present paper can be summarized as follows: \emph{tautological intersection numbers can be organized into evaluations of Ehrhart polynomials of partial polytopal complexes}.

Our intent is to present an exposition that is accessible to both algebraic geometers interested in $\mbar_{g, n}$, and combinatorialists and discrete geometers coming from Ehrhart theory. 

\subsection{Main Result}

An important algebraic object attached to the moduli space of pointed stable curves is the \emph{tautological ring}:

\begin{equation*}
R^*(\mbar_{g, n})
\end{equation*}

\noindent This ring is a subring of $A^*(\mbar_{g, n})$, the Chow ring of $\mbar_{g, n}$. Beginning with the work of Mumford \cite{Mumford83}, great strides have been made in our understanding of $R^*(\mbar_{g, n})$. In particular, many important cycles in $A^*(\mbar_{g, n})$ have been shown to be tautological. \\

\noindent Let $\alpha \in R^{3g - 3 + n}(\mbar_{g, n})$. Since $\text{dim}(\mbar_{g, n}) = 3g - 3 + n$, we can integrate $\alpha$ against the fundamental class of $\mbar_{g, n}$ to obtain a \emph{tautological intersection number}:

\begin{equation*}
\left(\int_{\mbar_{g, n}}\alpha\right) \in \mathbb{Q}
\end{equation*}

\noindent Let $\mathbb{L}_i$ be the $i^{th}$ universal cotangent line bundle on $\mbar_{g, n}$ i.e. the line bundle whose fiber over a point $[C, p_1, \ldots, p_n] \in \mbar_{g, n}$ is $T_{p_i}^*C$, the cotangent space to the $i^{th}$ marked point. Define $\psi_i$ to be the first Chern class of $\mathbb{L}_i$,

\begin{equation*}
\psi_i := c_1(\mathbb{L}_i)
\end{equation*}

\noindent These elements in $R^1(\mbar_{g, n})$ are usually referred to as $\psi$-classes. They play a central role in the tautological intersection theory of $\mbar_{g, n}$ for a multitude of reasons. In particular, all tautological intersection numbers can be reduced to intersection numbers only involving $\psi$-classes, that is, intersection numbers of the form

\begin{equation*}
\left<\tau_{d_1}\ldots\tau_{d_n}\right>_g := \int_{\mbar_{g, n}}\psi_1^{d_1}\ldots\psi_n^{d_n}
\end{equation*}

\noindent The main theorem of this paper shows that these intersection numbers are evaluations of \emph{Ehrhart polynomials} of partial polytopal complexes. An Ehrhart polynomial is a counting function for integer lattice points of dilations of an integral polytope. More precisely, given an integral $d$-polytope $P \subset \mathbb{R}^d$, its Ehrhart polynomial is

\begin{equation*}
L_P(g) := \left| gP \cap \mathbb{Z}^d \right|
\end{equation*}

\noindent where $g$ is a positive integer, and $gP$ is the $g^{th}$ dilate of $P$. One can extend the notion of an Ehrhart polynomial to more general polyhedral objects, in particular, to \emph{partial polytopal complexes}, which are disjoint unions of open polytopes (see Section \ref{Ehrhart} below). \\

\noindent Here is our main theorem:

\begin{Theorem}\label{MainTheorem}
Let $n \geq 1$, $\vec{d} := (d_1, \ldots, d_n) \in \mathbb{Z}^n_{\geq 0}$. Define $|\vec{d}| := \sum_{i = 1}^n d_i$, $C(\vec{d}) := \prod_{i = 1}^n (2d_i+ 1)!!$, and $m(\vec{d}) = m := \left\lceil \frac{2 - n + |\vec{d}|}{3} \right\rceil - 1$. Then there exists an integral partial polytopal complex $P_{\vec{d}}$ of dimension $|\vec{d}|$ and volume $\text{vol}\left(P_{\vec{d}}\right) = 6^{|\vec{d}|}$ such that

\begin{equation*}
24^{g + m}(g + m)!C(\vec{d})\int_{\mbar_{g + m, n + 1}}\psi_1^{d_1}\ldots\psi_n^{d_n}\psi_{n + 1}^{3(g + m) - 2 + n - |\vec{d}|} = \#\{\text{integer lattice points in $gP_{\vec{d}}$}\}
\end{equation*}

\noindent where $gP_{\vec{d}}$ is the $g^{th}$ dilate of $P_{\vec{d}}$.  

\end{Theorem}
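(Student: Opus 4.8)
The plan is to translate the Virasoro constraints (equivalently, the Dijkgraaf--Witten--Verlinde--Verlinde / Virasoro recursion, or the DVV formula) for $\psi$-intersection numbers into a recursion for a sequence of polynomials in $g$, and then invoke Breuer's classification theorem. First I would fix $n$ and $\vec d$ and examine the normalized quantity
\begin{equation*}
F_{\vec d}(g) := 24^{g+m}(g+m)!\,C(\vec d)\int_{\mbar_{g+m,\,n+1}}\psi_1^{d_1}\cdots\psi_n^{d_n}\psi_{n+1}^{3(g+m)-2+n-|\vec d|},
\end{equation*}
where the shift $m=m(\vec d)$ is chosen precisely so that, as $g$ ranges over the positive integers, the genus $g+m$ and the exponent $3(g+m)-2+n-|\vec d|$ stay nonnegative and the dimension constraint $\sum d_i + (3(g+m)-2+n-|\vec d|) = 3(g+m)-3+(n+1)$ is automatically satisfied; so these are genuine intersection numbers. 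The key structural claim is that $F_{\vec d}(g)$ agrees, for all positive integers $g$, with a polynomial in $g$; I would prove this by strong induction using the string and dilaton equations together with the Virasoro/DVV recursion, checking that the combinatorial prefactor $24^{g+m}(g+m)!C(\vec d)$ is exactly the right integrating factor to clear all denominators and turn the rational recursion into one with integer coefficients. This is where the ``Virasoro constraints as a recursion for integer-valued polynomials'' assertion from the abstract gets established; I would expect the paper to have set this up in an earlier section, so I may cite it.

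Once $F_{\vec d}$ is known to be an integer-valued polynomial in $g$ of degree $|\vec d|$, the next step is to compute its leading coefficient and verify it is $6^{|\vec d|}/|\vec d|!$, so that the sought complex $P_{\vec d}$, if it exists, has dimension $|\vec d|$ and normalized volume $6^{|\vec d|}$. The leading term should come from the top $\psi_{n+1}$ power via the known large-genus asymptotics of $\psi$-intersection numbers (Aggarwal, Delecroix--Goujard--Zograf--Zorich, or directly from the recursion), combined with the $24^{g+m}(g+m)!$ factor which matches the asymptotic growth. Then the heart of the argument: I would apply Breuer's theorem, which states that a polynomial $L(g)$ is the Ehrhart polynomial (counting function) of some integral partial polytopal complex if and only if its $f^*$-vector — the coefficient vector of $L$ expressed in the binomial basis $\binom{g-1}{0}, \binom{g}{1}, \binom{g+1}{2}, \dots$ shifted appropriately (the transform dual to the $h^*$/$\delta$-vector) — is entrywise nonnegative. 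So the real content reduces to: \emph{show $f^*_i(F_{\vec d}) \geq 0$ for all $i$}.

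The main obstacle, then, is proving this nonnegativity of the $f^*$-vector. My strategy would be to derive a recursion for the $f^*$-vectors themselves, by pushing the Virasoro/DVV recursion through the linear change of basis from the monomial basis $\{g^k\}$ to the binomial basis $\{\binom{g+k-1}{k}\}$; since the string and dilaton equations are ``additive'' and the DVV term is a convolution-type sum over splittings of $\vec d$ and genus, one hopes the transformed recursion expresses $f^*(F_{\vec d})$ as a nonnegative integer combination of (products of) $f^*$-vectors of strictly smaller cases, so that induction closes. I would need to check the base cases by hand — e.g. $\vec d = (0)$, $\vec d = (1)$, and perhaps $(0,\dots,0)$ — exhibiting the actual polytopal complexes, which should be the ``inside-out polytopes'' mentioned for dimensions $1$ and $2$; a natural candidate in low dimension is a simplex (or union of simplices and lower-dimensional pieces) dilated appropriately so that its lattice-point count has the right leading coefficient $6^{|\vec d|}$. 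If the clean recursion for $f^*$-vectors does not manifestly preserve nonnegativity, the fallback is to bound the $f^*$-coefficients directly using positivity of $\psi$-intersection numbers and the explicit binomial-transform formula $f^*_j = \sum_{i} (-1)^{j-i}\binom{|\vec d|-i}{j-i} c_i$ where $c_i$ are coefficients of $F_{\vec d}$ in the basis $\binom{g+|\vec d|-i}{|\vec d|}$ — but I expect this direct route to be delicate, and getting the induction on the recursion to work is the crux of the whole theorem.
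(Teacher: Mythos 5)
Your proposal follows essentially the same route as the paper: normalize by $24^{g+m}(g+m)!\,C(\vec d)$, prove by induction on the string/dilaton/Virasoro recursion that the result is an integer-valued polynomial of degree $|\vec d|$ whose $f^*$-vector in the basis $\binom{g-1}{k}$ is nonnegative (using that shifts and products preserve this property), and conclude via Breuer's theorem. One small correction: the leading coefficient is the Euclidean volume $6^{|\vec d|}$ itself, not $6^{|\vec d|}/|\vec d|!$ --- you have conflated Euclidean with normalized volume (compare $L_{(2)}(g)=36g^2-36g+15$).
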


\noindent The statement of Theorem \ref{MainTheorem} has some idiosyncratic notation, and might seem a bit opaque, so let us take a moment to explain how one should think about what Theorem \ref{MainTheorem} actually says. \\

\noindent Suppose you fix an integer vector $\vec{d} = (d_1, \ldots, d_n) \in \mathbb{Z}_{\geq 0}^n$ which corresponds to a monomial of $\psi$-classes $\psi_1^{d_1}\ldots\psi_n^{d_n}$. Consider the family of intersection numbers

\begin{equation*}
\left\{ \left<\tau_{d_1}\ldots\tau_{d_n}\tau_{d_{n + 1}}\right>_g \right\}_{g. d_{n + 1} \geq 0}
\end{equation*}

\noindent Since $\vec{d}$ is fixed, in order for $\left<\tau_{d_1}\ldots\tau_{d_n}\tau_{d_{n+1}}\right>_g$ to be nonzero, $d_{n + 1}$ must be 

\begin{equation*}
3g - 3 + (n + 1) - |\vec{d}| = 3g - 2 + n - |\vec{d}|
\end{equation*}

\noindent which explains the exponent of the last insertion in Theorem \ref{MainTheorem}. Furthermore, notice that there exists a smallest genus $g$ such that $\left<\tau_{d_1}\ldots\tau_{d_n}\tau_{d_{n+1}}\right>_g \not= 0$. This genus is the smallest genus $g$ such that the exponent on the last insertion is nonnegative, that is, the smallest genus $g$ such that

\begin{equation*}
3g - 2 + n - |\vec{d}| \geq 0
\end{equation*}

\noindent which is precisely

\begin{equation*}
\left\lceil \frac{2 - n + |\vec{d}|}{3} \right\rceil
\end{equation*}

\noindent Consequently, we see that $m(\vec{d}) := \left\lceil \frac{2 - n + |\vec{d}|}{3} \right\rceil - 1$ is designed to be an appropriate shift of the genera, in that it ensures the following equivalence:

\begin{equation*}
\left<\tau_{d_1}\ldots\tau_{d_n}\tau_{d_{n+1}}\right>_{g + m(\vec{d})} \not = 0 \iff g \geq 1
\end{equation*}

\noindent The statement of Theorem 1 then says that there exists a partial polytopal complex $P_{\vec{d}}$ that only depends on $\vec{d}$, such that

\begin{equation}\label{firstinterpretation}
24^{g + m(\vec{d})}(g + m(\vec{d}))!C(\vec{d})\left<\tau_{d_1}\ldots\tau_{d_n}\tau_{d_{n+1}}\right>_{g + m(\vec{d})} = \#\{\text{integer lattice points in $gP_{\vec{d}}$}\}
\end{equation}

\noindent Phrased in this way, we see that the smallest genus in which $\left<\tau_{d_1}\ldots\tau_{d_n}\tau_{d_{n+1}}\right>_{g + m(\vec{d})} \not= 0$ corresponds to the first dilate of $P_{\vec{d}}$, the next smallest genus will correspond to the 2nd dilate of $P_{\vec{d}}$, and so on. Of course, one could easily rewrite the equation in Theorem \ref{MainTheorem} as

\begin{equation}\label{secondinterpretation}
24^gg!C(\vec{d})\left<\tau_{d_1}\ldots\tau_{d_n}\tau_{d_{n+1}}\right>_g = \#\{\text{integer lattices points in $(g - m(\vec{d}))P_{\vec{d}}$}\}
\end{equation}

\noindent However, notice that Equation \ref{secondinterpretation} is only valid for $g \geq \left\lceil \frac{2 - n + |\vec{d}|}{3}\right\rceil$. Both ways of phrasing Theorem \ref{MainTheorem} i.e. Equation \ref{firstinterpretation} and Equation \ref{secondinterpretation}, are equivalent. However, Equation \ref{firstinterpretation} emphasizes the role of the partial polytopal complex and its dilates, while Equation \ref{secondinterpretation} emphasizes the role of the intersection numbers $\left< \tau_{d_1}\ldots\tau_{d_n}\tau_{d_{n+1}}\right>_g$. In the present paper, we have chosen the former.  \\

\noindent In addition to the connection with lattice-point counting in partial polytopal complexes, Theorem \ref{MainTheorem} implies that, as a formal consequence of Ehrhart theory, the quantity

\begin{equation*}
24^gg!\left<\tau_{d_1}\ldots\tau_{d_n}\tau_{3g - 2 + n - |\vec{d}|}\right>_g
\end{equation*}

\noindent is a \emph{polynomial} in $g$, whose leading coefficient is

\begin{equation*}
\frac{6^{|\vec{d}|}}{C(\vec{d})}
\end{equation*}

\noindent As far as the author is aware, this observation has not been fleshed out in the literature. We hope that this observation of polynomiality will contribute to the development of faster algorithms to compute $\left<\tau_{d_1}\ldots\tau_{d_n}\right>_g$. It may also contribute to a different understanding of the large-genus asymptotics of $\left<\tau_{d_1}\ldots\tau_{d_n}\right>_g$.

\subsection{Strategy of Proof}

\noindent The proof of Theorem \ref{MainTheorem} is in Section \ref{IPP}. There are three main components that comprise the argument. \\

\noindent Let $\vec{d} \in \mathbb{Z}^n_{\geq 0}$, and define

\begin{equation*}
L_{\vec{d}}(g) := 24^gg!C(\vec{d})\left<\tau_{\vec{d}}\tau_{3g - 2 + n - |\vec{d}|}\right>_g
\end{equation*} 

\noindent A priori, this is an arbitrary family of intersection numbers parametrized by genera $g$. However, we prove that $L_{\vec{d}}(g)$ is an \emph{integer-valued polynomial} in $g$ whose leading coefficient is $6^{|\vec{d}|}$. In order to prove this, we use the fact that $L_{\vec{d}}(g)$ can be recursively computed by the \emph{String Equation}, the \emph{Dilaton Equation}, and the (higher) \emph{Virasoro constraints}. We then prove that the property of being an integer-valued polynomial with leading coefficient $6^{|\vec{d}|}$ is a property that remains invariant under all three of these recursive operations. This concludes the first component of the argument. \\

\noindent Once we know that $L_{\vec{d}}(g)$ is an integer-valued polynomial, we can consider the shifted integer-valued polynomial $L_{\vec{d}}(g + m(\vec{d}))$ (see the paragraphs directly following the statement of Theorem \ref{MainTheorem} for an explanation of the shift $m(\vec{d})$). We then expand $L_{\vec{d}}(g + m(\vec{d}))$ in the binomial basis $\{{g - 1 \choose k}\}$. The choice of the binomial basis $\{{g - 1 \choose k}\}$ is not an arbitrary choice: in the field of Ehrhart theory, such an expansion of an Ehrhart polynomial computes the $f^*$-vector of the polynomial, which, under certain assumptions, gives one information about the geometry of the corresponding polytope (see Section \ref{Ehrhart} below). We prove that the $f^*$-vector of $L_{\vec{d}}(g + m(\vec{d}))$ is nonnegative. The strategy for this component of the argument is the same as in the previous one: we show that nonnegativitiy of the $f^*$-vector is a property that remains invariant throughout all recursive procedures that compute $L_{\vec{d}}(g + m(\vec{d}))$. \\

\noindent Once we know that the $f^*$-vector of $L_{\vec{d}}(g + m(\vec{d}))$ is nonnegative, we apply the following classification theorem of Breuer: \emph{a polynomial $P(g)$ of degree $d$ is the Ehrhart polynomial of a partial polytopal complex of dimension $d$ if and only if the $f^*$-vector of $P(g)$ is integral and nonnegative}. 

\subsection{Outline of Paper}

\noindent Our intention is to make the paper readable to combinatorialists from Ehrhart theory and algebraic geometers interested in $\mbar_{g, n}$. Consequently, we spend a good portion of the paper explaining fundamental ideas and results from both fields. However, we keep technical details to a minimum, and we refer the reader to sources in the literature when necessary. \\

In Section \ref{Ehrhart}, we discuss standard results and basic notions from Ehrhart theory. The main goal of Section \ref{Ehrhart} is to define what one means when one refers to the `Ehrhart polynomial of a partial polytopal complex'. If one is already familiar with what this means, it is safe to skip this section. \\

In Section \ref{Tautological}, we recall results concerning tautological intersection numbers, especially the ones needed for this paper. The goal of this section to show how one computes the tautological intersection number $\left<\tau_{d_1}\ldots\tau_{d_n}\right>_g$ recursively using the String/Dilaton equation and the Virasoro constraints. \\

In Section \ref{IPP} we present the proof of Theorem \ref{MainTheorem}. The main computational idea is to view the Virasoro constraints as a recursion for integer-valued polynomials in general, and Ehrhart polynomials in particular.  \\

The paper ends with the computation of a few examples (see (Section \ref{Examples}), along with an outline for future work (Section \ref{Future}).

\subsection{Ackowledgments}

The author is grateful to Renzo Cavalieri for his comments and suggestions on an earlier draft. Many thanks to David and Majka Phillips for their hospitality while the author was making edits.  \\

This work was supported by the Cluster of Excellence \emph{Mathematics Münster} and the CRC 1442 \emph{Geometry: Deformations and Rigidity}. ``Funded by the Deutsche Forschungsgemeinschaft (DFG, German Research Foundation) - Project-I 427320536 - SFB 1442, as well as under Germany's Excellence Strategy EXC 2044390685587, Mathematics Münster: Dynamics - Geometry - Structure"

\section{Ehrhart Theory}\label{Ehrhart}

The purpose of this section is to define the Ehrhart polynomial of a partial polytopal complex. Pedagogically, it seems natural to first begin with a discussion on polytopes, which will provide the necessary background to discuss partial polytopal complexes. \\

For more details concerning the Ehrhart theory of convex integral $d$-polytopes, we recommend (\cite{beck2007computing}, Chapter 3). For the most part, the standard techniques and ideas of Ehrhart theory in the context of polytopes extend to the context of partial polytopal complexes. For an exposition on partial polytopal complexes that aligns well with the purposes of  this paper, see  (\cite{breuer2012ehrhart}, Section 2). \\

\subsection{Ehrhart Polynomials of Convex Integral $d$-Polytopes}

\noindent Let $P \subset \mathbb{R}^d$ be an integral convex polytope, and let $v_1, \ldots, v_n \in \mathbb{Z}^d$ be the vertices of $P$, 

\begin{equation*}
P = \text{Conv}(v_1, \ldots, v_n) \subset \mathbb{R}^d
\end{equation*}

\noindent Throughout, we always assume that $P$ is \emph{full-dimensional}, that is, $P$ is a $d$-polytope. For an integer $g \geq 1$, the $g^{th}$-dilate of $P$, denoted $gP$, is defined to be 

\begin{equation*}
gP := \text{Conv}(gv_1, \ldots, gv_n) = \{gp : p \in P\}
\end{equation*}

\noindent Ehrhart theory is chiefly concerned with the task of counting \emph{lattice points} in $gP$, i.e. understanding the function

\begin{equation*}
L_P(g) : g \mapsto |gP \cap \mathbb{Z}^d|
\end{equation*}

\noindent Here is the main theorem of Ehrhart theory:

\begin{Theorem}[Ehrhart's Theorem]\label{EhrhartTheorem}
$L_P(g)$ is a rational polynomial in $g$ of degree $d$.
\end{Theorem}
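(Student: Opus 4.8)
**The plan is to prove Ehrhart's Theorem via the classical triangulate-then-count approach, reducing to the case of simplices and there computing the generating function explicitly.**

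First I would reduce to the case of a simplex. Any integral $d$-polytope $P$ admits a triangulation into integral $d$-simplices using only the vertices of $P$ (no new vertices). By inclusion-exclusion over the faces of this triangulation, if I can show that the lattice-point-counting function of every integral simplex (of every dimension) is a polynomial of the expected degree, then $L_P(g)$, being an alternating sum of such functions, is itself a polynomial; a short argument tracking the top-dimensional pieces shows the degree is exactly $d$. So the heart of the matter is a single integral $d$-simplex $\Delta = \mathrm{Conv}(v_0, \ldots, v_d)$.

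For the simplex, the standard trick is to pass to the cone. Embed $\mathbb{R}^d$ as the hyperplane $\{x_{d+1} = 1\}$ in $\mathbb{R}^{d+1}$, lift each vertex $v_i$ to $w_i = (v_i, 1)$, and let $\mathrm{Cone}(\Delta)$ be the cone generated by the $w_i$. The lattice points in $\mathrm{Cone}(\Delta)$ at height $g$ are exactly the lattice points of $g\Delta$, so the integer-point transform
\begin{equation*}
\sigma_{\mathrm{Cone}(\Delta)}(z_1, \ldots, z_{d+1}) = \sum_{m \in \mathrm{Cone}(\Delta) \cap \mathbb{Z}^{d+1}} z^m
\end{equation*}
encodes all the counts $L_\Delta(g)$ as the coefficients of a single variable $t := z_{d+1}$ after setting the other $z_i = 1$. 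One then computes this transform by tiling the cone with lattice translates of the half-open fundamental parallelepiped $\Pi = \{\lambda_0 w_0 + \cdots + \lambda_d w_d : 0 \le \lambda_i < 1\}$; since the $w_i$ are linearly independent (as $\Delta$ is a genuine $d$-simplex), every lattice point in the cone is uniquely a nonnegative-integer combination of the $w_i$ plus a lattice point of $\Pi$. This yields
\begin{equation*}
\sigma_{\mathrm{Cone}(\Delta)} = \frac{\sum_{p \in \Pi \cap \mathbb{Z}^{d+1}} z^p}{(1 - z^{w_0})(1 - z^{w_1})\cdots(1 - z^{w_d})}.
\end{equation*}
Specializing all variables except $t$ to $1$ gives $\sum_{g \ge 0} L_\Delta(g) t^g = h(t)/(1-t)^{d+1}$, where $h(t)$ is a polynomial of degree at most $d$ with nonnegative integer coefficients (it just records the heights of the finitely many lattice points in $\Pi$). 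Expanding $1/(1-t)^{d+1} = \sum_g \binom{g+d}{d} t^g$ and reading off coefficients shows $L_\Delta(g)$ agrees for all $g \ge 1$ with a rational polynomial in $g$ of degree exactly $d$ (the degree is $d$, not less, because the leading term is governed by the normalized volume of $\Delta$, which is positive).

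**The main obstacle** is establishing the two geometric facts I have been leaning on: that an integral polytope can be triangulated using only its own vertices, and — more delicately — that the half-open parallelepiped $\Pi$ contains only finitely many lattice points and that these, together with $\mathbb{Z}_{\ge 0}$-combinations of the $w_i$, partition $\mathrm{Cone}(\Delta) \cap \mathbb{Z}^{d+1}$. The finiteness is clear since $\Pi$ is bounded, but the clean "unique decomposition" statement requires care with the half-open conventions. Everything else — the inclusion-exclusion bookkeeping over the triangulation, the geometric-series expansion, extracting the leading coefficient — is routine. Since the excerpt only asks for the polynomiality statement (Theorem \ref{EhrhartTheorem} as stated mentions neither Ehrhart--Macdonald reciprocity nor the $h^*$-nonnegativity, though the latter falls out of the proof), I would stop once the generating-function computation delivers a degree-$d$ rational polynomial.
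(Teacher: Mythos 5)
Your proposal is correct, and since the paper does not prove Ehrhart's theorem itself but defers to \cite{beck2007computing}, Theorem 3.8, your argument --- triangulate without new vertices, cone over each simplex, tile the cone by lattice translates of the half-open fundamental parallelepiped, and read off $\sum_{g} L_\Delta(g)t^g = h(t)/(1-t)^{d+1}$ --- is precisely the proof given in that cited reference. Nothing further is needed.
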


\noindent We call $L_P(g)$ the \emph{Ehrhart polynomial} of $P$. Theorem \ref{EhrhartTheorem} is originally due to Eug\`{e}ne Ehrhart. For a proof, see (\cite{beck2007computing}, Theorem 3.8). \\

\noindent An important family of polytopes are the $d$-dimensional \emph{simplices}. An integral \emph{$d$-simplex} is a $d$-dimensional polytope $\Delta \subset \mathbb{R}^d$ that is the convex hull of $d + 1$ affinely independent integral points. The \emph{standard $d$-simplex} is given by $\Delta_d := \text{Conv}(\vec{0}, e_1, \ldots, e_d) \subseteq \mathbb{R}^d$. An \emph{open $d$-simplex} is the relative interior of a $d$-simplex.

\begin{Example}\label{runningpolytopeexample}

\noindent Let $P \subset \mathbb{R}^2$ be the standard $2$-simplex,

\begin{equation*}
P = \text{Conv}(\vec{0}, e_1, e_2) \subset \mathbb{R}^2
\end{equation*}

\noindent By Ehrhart's theorem, we know there exists rational numbers $a_0, a_1$, and $a_2$ such that

\begin{equation*}
L_P(g) = a_2g^2 + a_1g + a_0
\end{equation*}

\noindent Counting lattice points by hand (see Figure \ref{runningfigure}), we see that $L_P(1) = 3, L_P(2) = 6$, and $L_P(3) = 10$. These equations suffice to determine the coefficients $a_0, a_1$, and $a_2$, and we obtain

\begin{equation*}
L_P(g) = \frac{1}{2}g^2 + \frac{3}{2}g + 1
\end{equation*}

\end{Example}

\begin{figure}
\begin{center}
\begin{tikzpicture}
\draw (0, 0) -- (1, 0) -- (0, 1) -- (0, 0);
\draw (2, 0) -- (4, 0) -- (2, 2) -- (2, 0);
\draw (5, 0) -- (8, 0) -- (5, 3) -- (5, 0);

\filldraw (0, 0) circle (2pt);
\filldraw (1, 0) circle (2pt);
\filldraw (0, 1) circle (2pt);

\draw (0.5, -0.5) node {$L_P(1) = 3$};

\filldraw (2, 0) circle (2pt);
\filldraw (3, 0) circle (2pt);
\filldraw (4, 0) circle (2pt);
\filldraw (2, 1) circle (2pt);
\filldraw (3, 1) circle (2pt);
\filldraw (2, 2) circle (2pt);

\draw (3, -0.5) node {$L_P(2) = 6$};

\filldraw (5, 0) circle (2pt);
\filldraw (6, 0) circle (2pt);
\filldraw (7, 0) circle (2pt);
\filldraw (8, 0) circle (2pt);
\filldraw (5, 1) circle (2pt);
\filldraw (6, 1) circle (2pt);
\filldraw (7, 1) circle (2pt);
\filldraw (5, 2) circle (2pt);
\filldraw (6, 2) circle (2pt);
\filldraw (5, 3) circle (2pt);

\draw (6.5, -0.5) node {$L_P(3) = 10$};

\end{tikzpicture}
\end{center}
\caption{As is in Example \ref{runningpolytopeexample}, we can compute $L_P(g)$ by polynomial interpolation}
\label{runningfigure}

\end{figure}
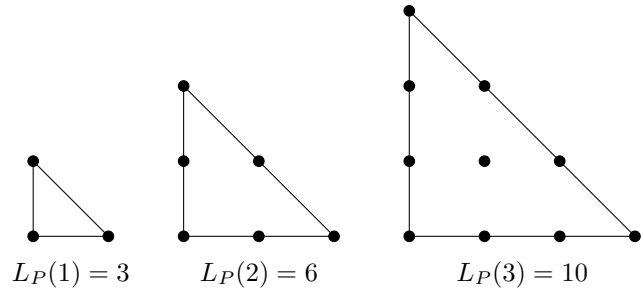

We need a slight generalization of integral $d$-polytopes. Instead of dealing with one polytope at a time, it is possible to take a collection of polytopes and glue them along their faces, albeit in a compatible way. We call these objects \emph{polytopal complexes} (see also \cite{ziegler2012lectures}, Definition 5.1):

\begin{Definition}

\noindent A \emph{polytopal complex} is a finite collection $K$ of polytopes that satisfies the following three properties:

\begin{enumerate}
\item{The empty polytope is in $K$}
\item{$P \in K, f\subseteq P$ is a face of P $\implies f \in K$ }
\item{$P, Q \in K \implies P\cap Q \in K$, and $P\cap Q$ is a face of both $P$ and $Q$}
\end{enumerate}

\noindent The elements of $K$ are called the \emph{faces} of $K$. The dimension of $K$ is the maximum dimension of the faces of $K$.

\end{Definition}

\noindent Ehrhart's theorem still holds in the context of polytopal complexes, that is, the counting function $L_K(g) = \#\{\text{integer lattice points in $gK$}\}$ is a rational polynomial in $g$ of degree $d$. \\

\noindent Let $P, Q \subseteq \mathbb{R}^d$ be integral $d$-polytopes. We say $P$ and $Q$ are \emph{lattice equivalent} if there exists an affine isomorphism $\phi: \mathbb{Z}^d \rightarrow \mathbb{Z}^d$ sending the vertices of $P$ to the vertices of $Q$. A polytopal complex $K$ is a \emph{simplicial complex} if every face of $K$ is a simplex. A \emph{triangulation} of an integral polytope $P$ is a simplicial complex whose support is $P$. We say that a triangulation is \emph{unimodular} if the simplices in the corresponding simplicial complex are lattice equivalent to the standard simplex.  \\

\noindent Unimodular triangulations are useful in Ehrhart theory due to the following result:

\begin{Theorem}
Let $(P, K)$ be a unimodular triangulation of an integral $d$-polytope $P$. Define
\begin{equation*}
f_i^* := \#\{\text{$i$-dimensional open simplices in $K$}\}
\end{equation*}
Then the Ehrhart polynomial of $P$ has the following form:
\begin{equation*}
L_P(g) := \sum_{k = 0}^df_k^*{g - 1 \choose k} 
\end{equation*}
\end{Theorem}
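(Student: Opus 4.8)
The plan is to exhibit an explicit bijection between the lattice points of $gP$ and the open simplices of $K$, keeping track of dilation factors. First I would recall that for a unimodular $d$-simplex $\Delta$ lattice-equivalent to the standard simplex $\Delta_d = \mathrm{Conv}(\vec 0, e_1, \ldots, e_d)$, the $g$-th dilate satisfies $L_\Delta(g) = \binom{g+d}{d}$; this is the classical computation (count weakly increasing sequences, or lattice points in $\{0 \le x_1 \le \cdots \le x_d \le g\}$) and is proved in the reference already cited. The point is then that the half-open decomposition $\Delta_d = \bigsqcup$ (relative interiors of its faces) refines, upon dilation, into a count $\binom{g+d}{d} = \sum_{k=0}^d \binom{g-1}{k} f_k^*(\Delta)$ where $f_k^*(\Delta) = \binom{d+1}{k+1}$ is the number of $k$-faces of a $d$-simplex.

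Next I would globalize this. Since $(P,K)$ is a triangulation, $P$ is the disjoint union of the relative interiors of the faces of $K$: every point of $P$ lies in the relative interior of exactly one face. Dilating by $g$, the lattice points of $gP$ decompose as
\begin{equation*}
L_P(g) = |gP \cap \mathbb{Z}^d| = \sum_{\sigma \in K} |\,\mathrm{relint}(g\sigma) \cap \mathbb{Z}^d\,| = \sum_{k=0}^d \left( \sum_{\substack{\sigma \in K \\ \dim \sigma = k}} |\,\mathrm{relint}(g\sigma) \cap \mathbb{Z}^d\,| \right).
\end{equation*}
For each $k$-dimensional face $\sigma$ of $K$, unimodularity says $\sigma$ is lattice-equivalent to $\Delta_k$, so the count of interior lattice points of its $g$-th dilate is the interior Ehrhart quasi-polynomial of $\Delta_k$. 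By Ehrhart--Macdonald reciprocity (or a direct count: strictly increasing sequences $0 < x_1 < \cdots < x_k < g$), this interior count equals $\binom{g-1}{k}$. Substituting gives $L_P(g) = \sum_{k=0}^d \binom{g-1}{k} \cdot \#\{k\text{-dimensional faces of } K\} = \sum_{k=0}^d f_k^* \binom{g-1}{k}$, which is the claimed identity, with the understanding that an ``open $k$-simplex in $K$'' means the relative interior of a $k$-face.

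The main obstacle — really the only nontrivial input — is the evaluation $|\mathrm{relint}(g\Delta_k) \cap \mathbb{Z}^k| = \binom{g-1}{k}$ for the open standard simplex, together with the lattice-invariance of this count under the affine isomorphism realizing the lattice equivalence $\sigma \cong \Delta_k$. The first is a short combinatorial count (compositions of $g$ into $k+1$ positive parts, equivalently $\binom{g-1}{k}$); the second is immediate since an affine lattice isomorphism preserves both lattice points and relative interiors. One should also note the edge case $k=0$: a vertex contributes $\binom{g-1}{0}=1$ lattice point for every $g \ge 1$, consistent with $\mathrm{relint}$ of a point being the point itself. Everything else is bookkeeping: the disjointness of relative interiors of faces of a polytopal complex (property (3) of the complex guarantees faces meet only in common subfaces, so their relative interiors are pairwise disjoint), and linearity of the lattice-point count over this disjoint union.
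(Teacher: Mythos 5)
Your argument is correct and complete: the paper itself states this theorem without proof (it is standard background, implicit in the cited references \cite{beck2007computing} and \cite{breuer2012ehrhart}), and your proof is exactly the standard one those sources use — decompose $P$ into the disjoint relative interiors of the faces of $K$, count $\left|\mathrm{relint}(g\Delta_k)\cap\mathbb{Z}^k\right| = \binom{g-1}{k}$ for a unimodular $k$-simplex via compositions of $g$ into $k+1$ positive parts, and sum over faces by dimension. The only inessential detour is the opening Vandermonde-style consistency check $\binom{g+d}{d}=\sum_k\binom{d+1}{k+1}\binom{g-1}{k}$, which you could omit.
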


\noindent We define the \emph{$f^*$-vector} of $P$ to be the vector of integers $(f_0^*, \ldots, f_d^*)$.

\begin{Example}
Recall the polytope $P$ given in Example \ref{runningexample}, $P = \text{Conv}(\vec{0}, e_1, e_2)$. This is just the standard two-dimensional simplex. The simplex itself provides a unimodular triangulation. Upon inspection we see that its $f^*$-vector is $(3, 3, 1)$. Therefore,
\begin{align*}
L_P(g) & = 3{g - 1 \choose 0} + 3{g - 1 \choose 1} + {g - 1 \choose 2} \\
& = 3 + 3(g - 1) + \frac{1}{2}(g - 1)(g - 2) \\
& = 3g + \frac{1}{2}(g^2 - 3g + 2) \\
& = \frac{1}{2}g^2 + \frac{3}{2}g + 1
\end{align*} 
\noindent as expected.
\end{Example}

\noindent Not every integral polytope $P$ admits a unimodular triangulation. However, it always makes sense to talk about the $f^*$-vector of an integral polytope. The reason is as follows. Suppose $L(g) \in \mathbb{Q}[g]$ is a polynomial of degree $d$. The set $\{{g - 1 \choose k}\}_{k = 0}^d$ forms $\mathbb{Q}$-basis for the vector space of all rational polynomials of degree $d$. Therefore, there exists a unique vector $(f_0^*, \ldots, f_d^*) \in \mathbb{Q}^{d + 1}$ such that $L(g) = \sum_{k = 0}^df_k^*{g - 1 \choose k}$. The $f^*$-vector of an integral polytope $P$ is the unique vector $(f_0^*,\ldots f_d^*) \in \mathbb{Q}^{d + 1}$ such that $L_P(g) = \sum_{k = 0}^df_k^*{g - 1 \choose k}$. \\

\noindent However, notice that $L_P(g)$ is actually an \emph{integer-valued polynomial}:

\begin{Definition}
An \emph{integer-valued polynomial} $L(g) \in \mathbb{Q}[g]$ is a polynomial such that $L(\mathbb{N}) \subseteq \mathbb{Z}$.
\end{Definition}

\noindent It turns out that the $f^*$-vector of an integer-valued polynomial is always integral. Therefore, we can make the following definition:

\begin{Definition}
Let $P$ be a convex integral $d$-polytope. The $f^*$-vector of $P$ is the unique integer tuple $(f_0^*, \ldots, f_d^*) \in \mathbb{Z}^{d + 1}$ such that
\begin{equation*}
L_P(g) = \sum_{k = 0}^df_k^*{g - 1 \choose k}
\end{equation*} 
\end{Definition}

\subsection{Ehrhart Polynomials of Partial Polytopal Complexes}

\noindent Generalizing even further, we need to consider $\emph{open $d$-polytopes}$. An \emph{open $d$ polytope} is the relative interior of an integral $d$-polytope. This brings us to the generalization of polytopal complexes that we need:

\begin{Definition}
An \emph{integral partial polytopal complex} $K$ is the disjoint union of a finite collection of open integral polytopes. The elements of $K$ are called the faces of $K$. The dimension $d$ of $K$ is the maximum dimension of the faces of $K$. The Ehrhart polynomial of $K$, denoted $L_K(g)$, is the sum of the Ehrhart polynomials of each face of $K$.
\end{Definition}

\begin{Remark}
Notice that an integral partial polytopal complex is a generalization of an integral polytopal complex. This follows from the observation that the support of an integral polytopal complex is the disjoint union of the relative interiors of all of its faces. Thus, every polytopal complex is neccessarily a partial polytopal complex. Intuitively, one can think of this generalization as simply allowing oneself to `excise' or `throw away' faces of any polytope $P \in K$ in a polytopal complex.
\end{Remark}

\begin{Definition}
Let $K$ be a partial polytopal complex of dimension $d$. A \emph{triangulation} $\mathcal{T}$ of $K$ is a disjoint union of open simplices whose support is $K$. The triangulation $\mathcal{T}$ is \emph{unimodular} if the closure of each open simplex in $\mathcal{T}$ is lattice equivalent to the standard simplex. The $f^*$-vector of $\mathcal{T}$ is $(f_0^*, \ldots f_d^*)$, where $f_i^* := \#\{\text{$i$-dimensional open simplices in $\mathcal{T}$}\}$. 
\end{Definition}

\noindent As in the case of polytopal complexes, if one can find a unimodular triangulation of a partial polytopal complex, this immediately gives its Ehrhart polynomial:

\begin{Theorem}\label{unimodular}
Let $K$ be a partial polytopal complex of dimension $d$ and let $\mathcal{T}$ be a unimodular triangulation of $K$. Then
\begin{equation*}
L_K(g) = \sum_{i = 0}^d f_i^*{g - 1 \choose i}
\end{equation*}
\noindent where $(f_0^*, \ldots, f_d^*)$ is the $f^*$-vector of $\mathcal{T}$.
\end{Theorem}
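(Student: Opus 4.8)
The plan is to reduce the identity to one elementary lattice-point count — that of the dilated open standard simplex — and then to add up over the faces of the triangulation, grouped by dimension.

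First I would record the crucial computation: if $\Delta_i^{\circ}$ denotes the relative interior of the standard simplex $\Delta_i = \text{Conv}(\vec{0}, e_1, \ldots, e_i)$, then $L_{\Delta_i^{\circ}}(g) = \binom{g-1}{i}$. Indeed, $g\Delta_i^{\circ} = \{x \in \mathbb{R}^i : x_j > 0 \text{ for all } j,\ \sum_j x_j < g\}$, so its lattice points are exactly the tuples $(x_1, \ldots, x_i) \in \mathbb{Z}_{>0}^i$ with $x_1 + \cdots + x_i \leq g-1$; adjoining a nonnegative slack coordinate $x_{i+1}$ and substituting $y_j = x_j - 1$ for $j \leq i$ turns this into counting nonnegative integer solutions of $y_1 + \cdots + y_i + x_{i+1} = g - 1 - i$, of which there are $\binom{g-1}{i}$. (For $i = 0$ this reads $1 = \binom{g-1}{0}$, consistent with the fact that the dilate of a single lattice point is again that lattice point.) Next I would invoke invariance of the Ehrhart polynomial under lattice equivalence to upgrade this to: if $\sigma \in \mathcal{T}$ is an open $i$-simplex whose closure is lattice equivalent to $\Delta_i$, then $L_{\sigma}(g) = L_{\Delta_i^{\circ}}(g) = \binom{g-1}{i}$. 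Concretely, an affine lattice isomorphism $\phi(x) = Ax + b$ with $A \in GL_i(\mathbb{Z})$ and $b \in \mathbb{Z}^i$ taking the closed standard simplex onto $\bar{\sigma}$ satisfies $g\bar{\sigma} = \phi_g(g\Delta_i)$ where $\phi_g(x) = Ax + gb$ is again a lattice automorphism, so $\phi_g$ restricts to a bijection $g\Delta_i^{\circ} \cap \mathbb{Z}^i \to g\sigma \cap \mathbb{Z}^i$.

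Finally I would assemble the pieces. Since $\mathcal{T}$ is a triangulation of $K$, as sets $K = \bigsqcup_{\sigma \in \mathcal{T}} \sigma$, hence $gK = \bigsqcup_{\sigma \in \mathcal{T}} g\sigma$ and therefore $\#\{\text{integer lattice points in } gK\} = \sum_{\sigma \in \mathcal{T}} \#\{\text{integer lattice points in } g\sigma\}$; as an identity of polynomials this reads $L_K(g) = \sum_{\sigma \in \mathcal{T}} L_{\sigma}(g)$ (this is consistent with the definition of $L_K$ as the sum of the Ehrhart polynomials of the faces of $K$, since both sides equal the lattice-point count of $gK$). Grouping the simplices of $\mathcal{T}$ by dimension and applying unimodularity together with the computation above yields
\[
L_K(g) = \sum_{i=0}^d \; \sum_{\substack{\sigma \in \mathcal{T} \\ \dim \sigma = i}} L_{\sigma}(g) = \sum_{i=0}^d \; \sum_{\substack{\sigma \in \mathcal{T} \\ \dim \sigma = i}} \binom{g-1}{i} = \sum_{i=0}^d f_i^* \binom{g-1}{i},
\]
which is the assertion.

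I do not expect a deep obstacle here; the combinatorial heart is the single identity $L_{\Delta_i^{\circ}}(g) = \binom{g-1}{i}$, and the rest is additivity over a disjoint decomposition. The one point demanding care is the bookkeeping for simplices $\sigma$ of dimension $k < d$ sitting inside $\mathbb{R}^d$: there "lattice equivalent to the standard simplex" must be read in the affine span of $\sigma$ equipped with its induced lattice, and one must check that the ambient count $\#(g\sigma \cap \mathbb{Z}^d)$ agrees with the intrinsic one — which it does, because by definition of lattice equivalence that induced affine lattice is unimodularly identified with $\mathbb{Z}^k$. This is standard and I would cite it rather than belabor it.
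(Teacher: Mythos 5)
Your proof is correct, and it is the standard argument: the paper itself states this theorem without proof (it is quoted as a known fact from Ehrhart theory, cf.\ Breuer's exposition), so there is nothing to diverge from. The computation $L_{\Delta_i^{\circ}}(g) = \binom{g-1}{i}$, the invariance under lattice equivalence, the additivity over the disjoint open simplices of $\mathcal{T}$, and the caveat about reading unimodularity intrinsically for lower-dimensional faces are exactly the ingredients one would cite.
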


\noindent Even if a partial polytopal complex does not admit a unimodular triangulation, it still makes sense to talk about its $f^*$-vector. Furthermore, Ehrhart polynomials of partial polytopal complexes are completely classified by their $f^*$-vector due to the following result of Breuer:

\begin{Theorem}[\cite{breuer2012ehrhart}, Theorem 2]\label{Breuer}
Let $P(g)$ be an integer-valued polynomial of degree $d$. Then $P(g)$ is the Ehrhart polynomial of an integral partial polytopal complex if and only if the $f^*$-vector $(f_0^*, \ldots, f_d^*)$ of $P(g)$ is non-negative i.e. $f_i^* \geq 0$ for all $0 \leq i \leq d$. 
\end{Theorem}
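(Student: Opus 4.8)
The plan is to prove both directions separately. For the forward direction, suppose $P(g) = L_K(g)$ is the Ehrhart polynomial of an integral partial polytopal complex $K$ of dimension $d$. The key idea is that every integral polytope admits a triangulation into (not necessarily unimodular) integral simplices, and every integral simplex can in turn be decomposed — not by a unimodular triangulation, but by a \emph{half-open decomposition} — into lattice-translated copies of half-open standard simplices of various dimensions. More concretely, the standard tool here is that the Ehrhart polynomial of any integral $d$-polytope can be written in the basis $\{\binom{g-1}{k}\}_{k=0}^d$ with \emph{nonnegative} integer coefficients precisely because one can stratify the polytope (or rather, realize its lattice-point count) as a disjoint union of relatively open simplices each lattice-equivalent to an open standard simplex; this is exactly the content of the existence of a "partial unimodular triangulation up to Ehrhart equivalence". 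Summing over all faces of $K$ and over the pieces of such a decomposition, the $f_i^*$ count the number of $i$-dimensional open standard simplices appearing, hence is a nonnegative integer. Integrality of the $f^*$-vector we already know from the discussion preceding the theorem, since $L_K(g)$ is integer-valued.

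For the reverse direction, suppose $P(g)$ is integer-valued of degree $d$ with $f^*$-vector $(f_0^*, \dots, f_d^*)$ all nonnegative. The plan is to \emph{build} a partial polytopal complex realizing $P(g)$ by hand. Recall that the open standard $k$-simplex $\Delta_k^\circ$ has Ehrhart polynomial $L_{\Delta_k^\circ}(g) = \binom{g-1}{k}$ (this is the well-known interior count: the number of lattice points in the interior of $g\Delta_k$ is $\binom{g-1}{k}$). Therefore, if we take $f_k^*$ disjoint copies of $\Delta_k^\circ$ for each $k = 0, \dots, d$ — placing them in disjoint regions of a sufficiently high-dimensional ambient space, or more economically arranging them as faces of a genuine complex — and let $K$ be their disjoint union, then by the definition of the Ehrhart polynomial of a partial polytopal complex as the sum of the Ehrhart polynomials of its faces, we get
\begin{equation*}
L_K(g) = \sum_{k=0}^d f_k^* \binom{g-1}{k} = P(g).
\end{equation*}
Nonnegativity of the $f_k^*$ is exactly what guarantees that "$f_k^*$ copies" makes sense; integrality guarantees it is a whole number of copies. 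That $\dim K = d$ follows because $f_d^* \neq 0$ (it equals $d!$ times the leading coefficient of $P$, which is nonzero as $\deg P = d$), so a genuine $d$-dimensional face is present.

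The main obstacle is the forward direction: one must genuinely know that \emph{every} integral polytope (not just those admitting an honest unimodular triangulation) has an $f^*$-vector with nonnegative entries. The clean way to see this is to invoke the half-open triangulation / shelling technique — any integral $d$-simplex $\Delta$ can be written as a disjoint union of half-open simplices, and iterating a "pulling" or "box" decomposition one reduces each such piece to a disjoint union of relatively open simplices each of which is lattice-equivalent to a standard open simplex, at the cost only of possibly using many small pieces. The bookkeeping that the resulting expansion of $L_P(g)$ in the $\binom{g-1}{k}$ basis has the $f_k^*$ literally counting these open standard pieces — and hence is manifestly nonnegative — is the technical heart, and it is precisely this that Breuer establishes in \cite{breuer2012ehrhart}; for the present exposition I would cite that argument rather than reproduce it, and focus the written proof on the (short, constructive) reverse direction together with a precise pointer for the forward one.
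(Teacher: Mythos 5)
The paper does not actually prove this statement: it is imported verbatim as Theorem 2 of \cite{breuer2012ehrhart}, so there is no internal argument to compare yours against. Judged on its own, your proposal is sound and supplies more than the paper does. The reverse direction is complete and correct: $L_{\Delta_k^\circ}(g) = \binom{g-1}{k}$ for the open standard $k$-simplex, so a disjoint union (separated by integral translations, so that lattice-point counts are preserved) of $f_k^*$ copies of $\Delta_k^\circ$ for each $k$ has Ehrhart polynomial $\sum_k f_k^*\binom{g-1}{k} = P(g)$, and $f_d^* = d!\cdot(\text{leading coefficient}) \neq 0$ ensures dimension $d$. This is also the only direction the paper ever uses (in Lemma \ref{shift} and in the proof of Theorem \ref{MainTheorem}). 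The forward direction is where the real content lies --- one must show that \emph{every} open integral polytope, unimodularly triangulable or not, has nonnegative $f^*$-vector --- and you rightly identify this as Breuer's contribution and defer to the citation, which is the same choice the paper makes; equivalently it follows from Stanley's nonnegativity of the $h^*$-vector together with the explicit nonnegative linear transformation expressing $f^*$ in terms of $h^*$. One small caution on your sketch of that direction: the phrase ``lattice-translated copies of half-open standard simplices'' oversells the geometry, since the pieces arising from the fundamental parallelepiped of a non-unimodular simplex are not lattice-equivalent to standard simplices; what one controls is only their contribution to the counting function, which is precisely why the bookkeeping is genuinely Breuer's and not a one-line reduction. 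As written, with the forward direction cited rather than proved, your argument is correct and consistent with how the paper treats the result.
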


\subsection{Useful Properties of Ehrhart Polynomials}

We recall properties of Ehrhart polynomials of partial polytopal complexes that will be useful in the proof of Theorem \ref{MainTheorem}.

\begin{Lemma}\label{shift}
Let $K$ be a partial polytopal complex of dimension $d$, and let $L_K(g)$ be its Ehrhart polynomial. Then, for any $k \geq 0$, there exists a partial polytopal complex $K'$ such that $L_K(g + k) = L_{K'}(g)$ is the Ehrhart polynomial of $K'$.
\end{Lemma}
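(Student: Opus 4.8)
The plan is to reduce Lemma~\ref{shift} to the simplest non-trivial case $k = 1$ and then iterate. Since $L_K(g+k) = L_{(\,\cdots((K')')\cdots\,)'}$ after $k$ applications of the $k=1$ construction, it suffices to produce, for an arbitrary partial polytopal complex $K$ of dimension $d$, a partial polytopal complex $K'$ of dimension $d$ with $L_{K'}(g) = L_K(g+1)$. By Breuer's classification (Theorem~\ref{Breuer}), $K'$ exists precisely when the $f^*$-vector of the integer-valued polynomial $g \mapsto L_K(g+1)$ is nonnegative, so the entire problem becomes the combinatorial statement: \emph{if $(f_0^*, \dots, f_d^*)$ is the $f^*$-vector of $L_K(g)$ and each $f_i^* \ge 0$, then the $f^*$-vector of $L_K(g+1)$ also has all entries nonnegative}. (One also needs $L_K(g+1)$ to still be integer-valued and of degree $d$, but integer-valuedness is immediate since $L_K(\mathbb{N}+1) \subseteq L_K(\mathbb{N}) \subseteq \mathbb{Z}$, and the degree is unchanged under the substitution $g \mapsto g+1$.)

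The key computational step is therefore to understand how the $f^*$-coordinates transform under $g \mapsto g+1$. Writing $L_K(g) = \sum_{k=0}^d f_k^* \binom{g-1}{k}$, we have $L_K(g+1) = \sum_{k=0}^d f_k^* \binom{g}{k}$, and using the Pascal identity $\binom{g}{k} = \binom{g-1}{k} + \binom{g-1}{k-1}$ we get $L_K(g+1) = \sum_{k=0}^d f_k^*\binom{g-1}{k} + \sum_{k=0}^d f_k^* \binom{g-1}{k-1} = \sum_{k=0}^d (f_k^* + f_{k+1}^*)\binom{g-1}{k}$, with the convention $f_{d+1}^* = 0$. Hence the new $f^*$-vector is $(f_0^* + f_1^*,\ f_1^* + f_2^*,\ \dots,\ f_{d-1}^* + f_d^*,\ f_d^*)$, which is manifestly nonnegative whenever the original one is. Applying Breuer's theorem now yields the desired $K'$, and iterating $k$ times gives the general statement. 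It is worth recording in the proof that this construction matches the intuition of dilating: $L_K(g+1)$ counts lattice points in the $(g+1)$-st dilate, so $K'$ is, combinatorially, ``$K$ together with one extra free dilation built in.''

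I expect no serious obstacle here; the only point requiring a little care is the bookkeeping at the boundary indices — making sure the convention $f_{d+1}^* = 0$ is used consistently so that the transformed vector has exactly $d+1$ entries and its top entry is $f_d^* > 0$ (preserving the degree $d$). A secondary subtlety, which I would mention but not belabor, is that Breuer's theorem is an existence statement: it does not pin down $K'$ uniquely, nor does it by itself guarantee that $K'$ can be taken inside $\mathbb{R}^d$ with the same ambient dimension as $K$ — but since the statement of Lemma~\ref{shift} only asserts the existence of \emph{some} partial polytopal complex with the right Ehrhart polynomial, this is harmless. If one wanted an explicit $K'$, one could instead note that $L_K(g+1) - L_K(g)$ has nonnegative $f^*$-vector $(f_1^*, f_2^*, \dots, f_d^*, 0)$ as well, exhibit a partial polytopal complex $\Delta$ realizing it, and set $K' = K \sqcup \Delta$; but this is an optional refinement rather than part of the core argument.
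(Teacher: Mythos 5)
Your proposal is correct and follows essentially the same route as the paper's proof: expand $L_K(g)$ in the basis $\bigl\{\binom{g-1}{k}\bigr\}$, apply the Pascal identity $\binom{g}{k} = \binom{g-1}{k} + \binom{g-1}{k-1}$ to see that the shifted $f^*$-vector $(f_0^*+f_1^*, \ldots, f_{d-1}^*+f_d^*, f_d^*)$ remains nonnegative, invoke Theorem~\ref{Breuer}, and induct on $k$. Your version is in fact slightly more explicit than the paper's, which leaves the transformed $f^*$-vector implicit.
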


\begin{proof}
This is a direct application of Theorem \ref{Breuer} and the basic binomial identity ${g  \choose k} = {g - 1 \choose k} + {g - 1 \choose k - 1}$. Indeed, by Theorem \ref{Breuer}, there exists a non-negative $f^*$-vector $(f_0^*, \ldots, f_d^*)$ such that
\begin{equation*}
L_K(g) = \sum_{k = 0}^df_k^*{g - 1 \choose k}
\end{equation*}
\noindent and therefore,
\begin{align*}
L_K(g + 1) & = \sum_{k = 0}^df_k^*{g \choose k} \\
& = f_0^*{g - 1 \choose 0} + \sum_{k = 1}^df_k^*{g \choose k} \\
& = f_0^*{g - 1 \choose 0} + \sum_{k = 1}^d f_k^*\left({g - 1 \choose k} + {g - 1 \choose k - 1}\right)  
\end{align*}
\noindent By theorem \ref{Breuer}, $L_K(g + 1)$ is the Ehrhart polynomial of some partial polytopal complex $K'$. Therefore, the desired result is true for $k = 1$, and we proceed by induction.
\end{proof}

\noindent The next property concerns the cartesian products of partial polytopal complexes. To get a better understanding of how this works, it makes sense to first start with taking products of polytopes, and then generalize. 

\begin{Definition}
Let $P \subset \mathbb{R}^{d_1}$ and $Q \subset \mathbb{R}^{d_2}$ be integral polytopes of dimension $d_1$ and $d_2$, respectively. The \emph{cartesian product}, or simply \emph{product} of the polytopes $P$ and $Q$ is the $(d_1 + d_2)$-dimensional integral polytope denoted by
\begin{align*}
P \times Q & := \{(p, q) \in \mathbb{R}^{d_1} \times \mathbb{R}^{d_2} = \mathbb{R}^{d_1 + d_2} : p \in P, q \in Q\} \\
& = \text{Conv}\{(p, q) \in \mathbb{R}^{d_1 + d_1} : p \ \text{a vertex of $P$}, \ q \ \text{a vertex of $Q$} \}
\end{align*}
The product of two open polytopes is defined similarly.
\end{Definition}

\noindent The operation of taking products of (open) polytopes plays well with Ehrhart polynomials in that, if $P$ and $Q$ are integral polytopes,

\begin{equation*}
L_P(g) \times L_Q(g) = L_{P \times Q}(g)
\end{equation*}

\noindent Furthermore, we can also define the product of two partial polytopal complexes $K_1$ and $K_2$ in a way that also plays well with taking their Ehrhart polynomials.

\begin{Definition}
Let $K_1$ and $K_2$ be partial polytopal complexes. The product $K_1 \times K_2$ is the partial polytopal complex defined by

\begin{align*}
K_1 \times K_2 & := \{(p, q) \in P \times Q : P \in K_1, Q \in K_2\} \\
& = \coprod_{P \in K_1, Q \in K_2}(P \times Q)
\end{align*}

\end{Definition}

\begin{Lemma}\label{product}
Let $K_1$ and $K_2$ be partial polytopal complexes, and let $L_{K_1}(g)$ and $L_{K_2}(g)$ be their Ehrhart polynomials, respectively. Then
\begin{equation*}
L_{K_1}(g) \times L_{K_2}(g) = L_{K_1 \times K_2}(g)
\end{equation*}
\end{Lemma}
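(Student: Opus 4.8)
The plan is to reduce the statement to the already-established product formula for Ehrhart polynomials of individual (open) polytopes, $L_{P}(g) \cdot L_{Q}(g) = L_{P \times Q}(g)$, by exploiting the distributivity of multiplication over the sums of polynomials that appear when one decomposes a partial polytopal complex into its faces. First I would recall that, by definition, the Ehrhart polynomial of a partial polytopal complex is the sum of the Ehrhart polynomials of its faces: writing $K_1 = \coprod_{P \in K_1} P$ and $K_2 = \coprod_{Q \in K_2} Q$ as disjoint unions of open integral polytopes, we have $L_{K_1}(g) = \sum_{P \in K_1} L_P(g)$ and $L_{K_2}(g) = \sum_{Q \in K_2} L_Q(g)$.

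Next I would verify that $K_1 \times K_2$, as defined, really is an integral partial polytopal complex, so that the statement even makes sense: it is the finite disjoint union $\coprod_{P \in K_1,\, Q \in K_2}(P \times Q)$ of open integral polytopes $P \times Q$, each of which is integral because the vertices of a product polytope are the pairs of vertices of the factors, hence lattice points. (One should also observe that the union is genuinely disjoint: if $(p,q)$ lies in both $P \times Q$ and $P' \times Q'$, then $p \in P \cap P'$ and $q \in Q \cap Q'$, forcing $P = P'$ and $Q = Q'$ since $K_1$ and $K_2$ are disjoint unions of their faces.) Then, directly from the definition of the Ehrhart polynomial of a partial polytopal complex,
\begin{equation*}
L_{K_1 \times K_2}(g) = \sum_{P \in K_1,\, Q \in K_2} L_{P \times Q}(g) = \sum_{P \in K_1,\, Q \in K_2} L_P(g)\, L_Q(g),
\end{equation*}
where the second equality uses the product formula for open polytopes recalled just before the statement.

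Finally I would collapse the double sum using distributivity:
\begin{equation*}
\sum_{P \in K_1,\, Q \in K_2} L_P(g)\, L_Q(g) = \left(\sum_{P \in K_1} L_P(g)\right)\!\left(\sum_{Q \in K_2} L_Q(g)\right) = L_{K_1}(g)\, L_{K_2}(g),
\end{equation*}
which is exactly the claimed identity. Honestly, there is no serious obstacle here: the whole content is the single-polytope product formula $L_P(g) L_Q(g) = L_{P \times Q}(g)$, which is cited as known, together with bookkeeping. The only point requiring a moment of care is the first paragraph's definitional reduction — making sure the ``disjoint union'' structure of a partial polytopal complex is used correctly so that the Ehrhart polynomial really is the plain sum over faces with no inclusion–exclusion corrections — but this is immediate from the definition given in the text. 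If one wanted to also justify the open-polytope product formula $L_P(g) L_Q(g) = L_{P\times Q}(g)$ itself rather than cite it, the quickest route is that lattice points of $g(P \times Q) = (gP) \times (gQ)$ are exactly pairs of lattice points in $gP$ and $gQ$, so the count factors; but the excerpt already asserts this, so I would simply invoke it.
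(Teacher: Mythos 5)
Your proof is correct and follows essentially the same route as the paper: decompose each complex into its open faces, apply the single-polytope product formula $L_{P\times Q}(g)=L_P(g)L_Q(g)$ to each pair, and collapse the double sum by distributivity. The extra checks you include (integrality and disjointness of the faces of $K_1\times K_2$) are sound additions that the paper omits, but they do not change the argument.
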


\begin{proof}
The Ehrhart polynomial of $K_1 \times K_2$ is, by definition, the sum of the Ehrhart polynomials of the faces of $K_1 \times K_2$. But the faces of $K_1 \times K_2$ are the open polytopes $\{P \times Q : P \in K_1, Q \in K_2\}$. Therefore
\begin{align*}
L_{K_1 \times K_2}(g) & = \sum_{P \in K_1, Q \in K_2}L_{P \times Q}(g) \\
& = \sum_{P \in K_1, Q \in K_2}L_P(g) \times L_Q(g) \\
& = \left(\sum_{P \in K_1}L_P(g)\right)\left(\sum_{Q \in K_2}L_Q(g)\right) \\
& = L_{K_1}(g) \times L_{K_2}(g)
\end{align*}  
\end{proof}

\begin{Lemma}\label{volume}
Let $K$ be a partial polytopal complex, and let $L_K(g)$ be its Ehrhart polynomial. Then the leading coefficient of $L_K(g)$ is the (Euclidean) volume of $K$.
\end{Lemma}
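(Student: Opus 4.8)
The plan is to reduce the statement to the classical fact for a single convex integral polytope and then sum contributions over the faces of the complex.

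First I would recall from standard Ehrhart theory (see \cite{beck2007computing}) that for a full-dimensional integral $e$-polytope $P \subset \mathbb{R}^e$, the Ehrhart polynomial $L_P(g)$ has degree $e$ and leading coefficient equal to the Euclidean volume $\mathrm{vol}(P)$. Next I would pass from $P$ to its relative interior $P^{\circ}$: since $gP = gP^{\circ} \sqcup \partial(gP)$ and the boundary $\partial(gP)$ is a finite union of integral polytopes of dimension at most $e - 1$, Theorem \ref{EhrhartTheorem} applied face-by-face (together with inclusion--exclusion) shows that $g \mapsto |\partial(gP) \cap \mathbb{Z}^e|$ agrees with a polynomial of degree at most $e - 1$. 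Hence $L_{P^{\circ}}(g) = L_P(g) - |\partial(gP) \cap \mathbb{Z}^e|$ is a polynomial of degree $e$ whose leading coefficient is still $\mathrm{vol}(P)$, and $\mathrm{vol}(P) = \mathrm{vol}(P^{\circ})$ because $\partial P$ has $e$-dimensional measure zero. (Alternatively, Ehrhart--Macdonald reciprocity $L_{P^{\circ}}(g) = (-1)^e L_P(-g)$ makes the equality of leading coefficients immediate.)

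With the open-polytope case settled, the passage to a partial polytopal complex $K = \coprod_i P_i^{\circ}$ of dimension $d$ is essentially bookkeeping. By definition $L_K(g) = \sum_i L_{P_i^{\circ}}(g)$, a sum of polynomials of degrees $e_i := \dim P_i^{\circ} \leq d$, so the coefficient of $g^d$ in $L_K(g)$ is $\sum_{i :\, e_i = d} \mathrm{vol}(P_i^{\circ})$, as only the top-dimensional faces contribute. Simultaneously, $\mathrm{vol}(K) = \sum_i \mathrm{vol}(P_i^{\circ})$ computed as $d$-dimensional Euclidean measure, and the faces of dimension strictly less than $d$ contribute $0$, so $\mathrm{vol}(K) = \sum_{i :\, e_i = d} \mathrm{vol}(P_i^{\circ})$ as well. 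Comparing the two expressions yields the claim.

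I do not anticipate any serious obstacle: the only subtle points are the single-polytope input (which I would cite rather than reprove) and the open-versus-closed comparison, after which the complex case follows purely from additivity of both $L_{(-)}(g)$ and $\mathrm{vol}(-)$ over faces, combined with a degree count.
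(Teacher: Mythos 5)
Your argument is correct and follows essentially the same route as the paper's proof: reduce to the classical statement for a single convex integral polytope (cited from \cite{beck2007computing}), observe that passing to the relative interior does not change the leading coefficient or the volume, and then sum over the faces of the complex, where only the top-dimensional ones contribute. The paper states this far more tersely; your version simply fills in the open-versus-closed comparison and the degree bookkeeping explicitly.
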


\begin{proof}
In the case that $K$ is an integral $d$-polytope, the statement is classical (see \cite{beck2007computing}, Corollary 3.20). The result then follows from the observation that the volume of an integral $d$-polytope is the same as the volume of its relative interior.
\end{proof}

\section{Tautological Intersection Numbers on $\mbar_{g, n}$}\label{Tautological}

In this section, we introduce $\mbar_{g, n}$, its tautological ring $R^*(\mbar_{g, n})$, and all of the necessary results that are required to compute intersection numbers of the form

\begin{equation*}
\left<\tau_{d_1}\ldots\tau_{d_n}\right>_g := \int_{\mbar_{g, n}}\psi_1^{d_1}\ldots\psi_n^{d_n}
\end{equation*}

\noindent For readers who would like a nice introduction to the tautological intersection theory of $\mbar_{g, n}$, especially one that focuses on computation, we would recommend \cite{zvonkine2014introduction} and \cite{pandharipande2015calculus}.

\subsection{The Moduli of Stable Curves}

Let $(g, n)$ be a pair of nonnegative integers such that $2g - 2 + n > 0$, and let $(C, p_1, \ldots, p_n)$ be an at-worst nodal curve of genus $g$, along with $n$ smooth marked points $p_1, \ldots, p_n$. We say that $(C, p_1, \ldots, p_n)$ is \emph{stable} if the automorphism group of $(C, p_1, \ldots, p_n)$ is finite. Alternatively, we say $(C, p_1, \ldots, p_n)$ is stable if it satisfies the following two conditions:

\begin{enumerate}
\item{If $C_0 \subseteq C$ is a rational irreducible component, then $C_0$ is incident to at least 3 `special points', that is, nodes or marked points}
\item{If $C_1 \subseteq C$ is an elliptic irreducible component, then $C_1$ is incident to at least 1 `special point'}
\end{enumerate}

\noindent We denote by $\mbar_{g, n}$ the moduli space of stable $n$-pointed genus $g$ curves. It is a smooth Deligne-Mumford stack of dimension $\text{dim}(\mbar_{g, n}) = 3g - 3 + n$. The universal curve is denoted

\begin{equation*}
\pi_{n + 1} : \mbar_{g, n + 1} \rightarrow \mbar_{g, n}
\end{equation*}

\noindent We often call $\pi_{n + 1}$ the \emph{forgetful morphism}: it sends a point $[C, p_1, \ldots, p_{n + 1}]$ to $[C, p_1, \ldots, p_n]$, that is, it `forgets' the last marked point. However, in order for this map to be well defined, one must contract any rational components that are unstable. Similarly, we define $\pi_i : \mbar_{g, n + 1} \rightarrow \mbar_{g, n}$ to be the morphism that forgets the $i^{th}$ marked point.  \\

\noindent Sitting inside $\mbar_{g, n}$ is a dense open locus of curves 

\begin{equation*}
\mathcal{M}_{g, n} \subset \mbar_{g, n}
\end{equation*}

\noindent parametrizing smooth $n$-pointed genus $g$ curves. The \emph{boundary} of $\mbar_{g, n}$, which is the complement of $\mathcal{M}_{g, n}$ in $\mbar_{g, n}$, parametrizes nodal stable curves. The boundary of $\mbar_{g, n}$ has a \emph{stratification}, and each stratum parametrizes curves of a fixed topological type. 

\begin{Example}\label{runningexample}
Consider the moduli space $\mbar_{2, 4}$. As a running example, we consider two strata $\Gamma_2 \subset \Gamma_1 \subset \mbar_{2, 4}$. \\

\noindent Let $\Gamma_1 \subset \mbar_{2, 4}$ be the stratum parametrizing curves of the following topological type: a generic curve $(C, p_1, p_2, p_3, p_4) \in \Gamma_1$ is a curve $C = C_1\cup C_1$, where $C_1$ and $C_2$ are stable curves of genus 1 attached via a node, $\{p_1, p_2\} \subset C_1$, and $\{p_3, p_4\} \subset C_2$. Below is a drawing of a generic curve in $\Gamma_1$ as a two-dimensional nodal topological surface:

\begin{center}
\begin{tikzpicture}

\draw plot [smooth cycle] coordinates {(0, 0) (0.5, -0.8) (1.5, -1) (2.5, -0.8) (3, 0) (2.5, 0.8) (1.5, 1) (0.5, 0.8)};
\draw plot [smooth cycle] coordinates {(3, 0) (3.5, -0.8) (4.5, -1) (5.5, -0.8) (6, 0) (5.5, 0.8) (4.5, 1) (3.5, 0.8)};

\draw plot [smooth] coordinates {(0.75, 0.2) (1.5, -0.025) (2.25, 0.2)};
\draw plot [smooth] coordinates {(1, 0.1) (1.5, 0.2) (2, 0.1)};

\draw plot [smooth] coordinates {(3.75, 0.2) (4.5, -0.025) (5.25, 0.2)};
\draw plot [smooth] coordinates {(4, 0.1) (4.5, 0.2) (5, 0.1)};

\filldraw (0.5, 0.25) circle (2pt);
\draw (0.5, 0.5) node {$p_1$};
\filldraw (0.5, -0.25) circle (2pt);
\draw (0.5, -0.5) node {$p_2$};

\filldraw (5.5, 0.25) circle (2pt);
\draw (5.5, 0.5) node {$p_3$};
\filldraw (5.5, -0.25) circle (2pt);
\draw (5.5, -0.5) node {$p_4$};

\end{tikzpicture}
\end{center}


\noindent Similarly, let $\Gamma_2 \subset \Gamma_1 \subset \mbar_{2, 4}$ be the stratum parametrizing curves of the following topological type: a generic curve $(C, p_1, p_2, p_3, p_4) \in \Gamma_2$ is a curve $C = C_1 \cup \tilde{C_2}$, where $C_1$ is a stable curve of genus 1, $\tilde{C_2}$ is a nodal curve of genus 1, $\{p_1, p_2\} \subset C_1$, and $\{p_3, p_4\} \subset \tilde{C_2}$. Below is a drawing of a generic curve in $\Gamma_2$ as a two-dimensional nodal topological surface:

\begin{center}
\begin{tikzpicture}

\draw plot [smooth cycle] coordinates {(0, 0) (0.5, -0.8) (1.5, -1) (2.5, -0.8) (3, 0) (2.5, 0.8) (1.5, 1) (0.5, 0.8)};

\filldraw (0.5, 0.25) circle (2pt);
\draw (0.5, 0.5) node {$p_1$};
\filldraw (0.5, -0.25) circle (2pt);
\draw (0.5, -0.5) node {$p_2$};

\filldraw (4, 0.5) circle (2pt);
\draw (4, 0.75) node {$p_3$};
\filldraw (4, -0.5) circle (2pt);
\draw (4, -0.75) node {$p_4$};

\draw plot [smooth] coordinates {(0.75, 0.2) (1.5, -0.025) (2.25, 0.2)};
\draw plot [smooth] coordinates {(1, 0.1) (1.5, 0.2) (2, 0.1)};

\draw plot [smooth cycle] coordinates {(3, 0) (4, -1) (5.75, -0.25) (5.5, 0) (5.25, -0.25) (5, 0) (5.25, 0.25) (5.5, 0) (5.75, 0.25) (4, 1)};

\draw plot [smooth] coordinates {(3, 0) (4, -0.1) (5, 0)};
\draw[dashed] (3, 0) -- (4, 0.1) -- (5, 0);

\end{tikzpicture}
\end{center}

\end{Example}

\noindent The boundary strata in $\mbar_{g, n}$ can be indexed by \emph{stable graphs}.

\begin{Definition}[\cite{pandharipande2015calculus}, Section 5.2]
A stable graph $\Gamma$ is the data

\begin{equation*}
\Gamma = (V, H, L, g : V \rightarrow \mathbb{Z}_{\geq 0}, v : H \rightarrow V, \iota : H \rightarrow H)
\end{equation*}

\noindent where,

\begin{enumerate}
\item{$V$ is a vertex set, and $g : V \rightarrow \mathbb{Z}_{\geq 0}$ is the genus assignment}
\item{$H$ is a set of half-edges, $v: H \rightarrow V$ is a vertex assignment i.e. indicates which vertex each half-edge is incident to, and $\iota : H \rightarrow H$ is an involution that indicates when two half edges are glued together}
\item{$E$ is an edge set, determined by the $2$-cycles of $\iota$}
\item{$L$ is a set of legs, determined by the fixed points of $\iota$; it is in bijection with the set of markings $\{1, \ldots, n\}$} 
\item{The data $(V, E)$ defines a connected graph.}
\item{For each vertex $v \in V$, $2g(v) - 2 + n(v) > 0$, where $n(v)$ is the valence of $\Gamma$ at v including both edges and legs}
\end{enumerate}

\end{Definition} 

\begin{Example}\label{runningexamplegraphs}
The stratum $\Gamma_1 \subset \mbar_{2, 4}$ from Example \ref{runningexample} corresponds to the following stable graph:

\begin{center}
\begin{tikzpicture}

\draw (0, 0) circle (10pt);
\draw (0.35, 0) -- (1, 0);
\draw (1.35, 0) circle (10pt);

\draw (-0.25, 0.25) -- (-0.5, 0.5);
\draw (-0.65, 0.65) node {$1$};
\draw (-0.25, -0.25) -- (-0.5, -0.5);
\draw (-0.65, -0.65) node {$2$};

\draw (1.6, 0.25) -- (1.85, 0.5);
\draw (2, 0.65) node {$3$};
\draw (1.6, -0.25) -- (1.85, -0.5);
\draw (2, -0.65) node {$4$};

\draw (0, 0) node {$1$};
\draw (1.35, 0) node {$1$};

\end{tikzpicture}
\end{center}

\noindent The above stable graph consists of two vertices, both with genus assignment $1$. Each vertex is incident to three half edges. The only $2$-cycle of the involution $\iota$ corresponds to the two half edges that glue together to form the edge connecting both vertices. The set of legs $L$ corresponds to the half-edges labelled $1, 2, 3$, and $4$. \\

\noindent The stratum $\Gamma_2 \subset \mbar_{2, 4}$ from Example \ref{runningexample} corresponds to the following stable graph:

\begin{center}
\begin{tikzpicture}

\draw (0, 0) circle (10pt);
\draw (0.35, 0) -- (1, 0);
\draw (1.35, 0) circle (10pt);

\draw (-0.25, 0.25) -- (-0.5, 0.5);
\draw (-0.65, 0.65) node {$1$};
\draw (-0.25, -0.25) -- (-0.5, -0.5);
\draw (-0.65, -0.65) node {$2$};

\draw (1.6, 0.25) -- (1.85, 0.5);
\draw (2, 0.65) node {$3$};
\draw (1.6, -0.25) -- (1.85, -0.5);
\draw (2, -0.65) node {$4$};

\draw (0, 0) node {$1$};
\draw (1.35, 0) node {$0$};

\draw plot [smooth] coordinates {(1.65, 0.15) (2.25, 0.1) (2.5, 0) (2.25, -0.1) (1.65, -0.15)};

\end{tikzpicture}
\end{center}

\noindent Notice that for this stable graph, the vertex with genus assignment $0$ has a self-edge, corresponding to the self-node.

\end{Example}

\noindent Due to the dictionary between stable graphs and boundary strata, we unambiguously refer to a boundary stratum by its stable graph. In the case of our running example, $\Gamma_1$ will mean `the stable graph corresponding to the boundary stratum $\Gamma_1$'. Similarly, $\Gamma_2$ will mean `the stable graph corresponding to the boundary stratum $\Gamma_2$. \\

\noindent Every stable graph $\Gamma$ corresponds to a product of moduli spaces:

\begin{equation*}
\mbar_{\Gamma} := \prod_{v \in V}\mbar_{g(v), n(v)}
\end{equation*}

\noindent For instance, in our running example, we have

\begin{align*}
\mbar_{\Gamma_1} & = \mbar_{1, 3} \times \mbar_{1, 3} \\
\mbar_{\Gamma_2} & = \mbar_{1, 3} \times \mbar_{0, 5}
\end{align*}

\noindent For every stable graph $\Gamma$, there exists a canonical morphism

\begin{equation*}
\xi_\Gamma : \mbar_\Gamma \rightarrow \mbar_{g, n}
\end{equation*}

\noindent whose image is the boundary stratum corresponding to $\Gamma$ (see \cite{pandharipande2015calculus}, Section 5.2 for details).

\subsection{The Tautological Ring} 

\begin{Definition}\label{tautological}
The \emph{tautological ring} $R^*(\mbar_{g, n}) \subset A^*(\mbar_{g, n})$ is the smallest $\mathbb{Q}$-subalgebra of $A^*(\mbar_{g, n})$ closed under pushforwards of the morphisms $\pi_i$ and $\xi_\Gamma$. Elements in $R^*(\mbar_{g, n})$ are called tautological classes.
\end{Definition}

\noindent It turns out that many important cycles in $A^*(\mbar_{g, n})$ are tautological, and this has prompted an intensive investigation of this ring in recent times. However, it is difficult to gain access to the tautological ring using only its definition. Fortunately, there is a nice result due to Graber and Pandharipande that gives an explicit additive generating set for $R^*(\mbar_{g, n})$ (see Theorem \ref{additive} below). In order to state this result, we need to define two types of Chow classes in $A^*(\mbar_{g, n})$, the $\psi$-classes, and the $\kappa$-classes.

\begin{Definition}
For $1 \leq i \leq n$, let $\mathbb{L}_i$ the line bundle on $\mbar_{g, n}$ whose fiber over a point $[C, p_1, \ldots, p_n] \in \mbar_{g, n}$ is $T_{p_i}^*C$, the cotangent space to $C$ at the $i^{th}$ marked point. For $1 \leq i \leq n$ and $0 \leq m \leq 3g - 3 + n$, define

\begin{align*}
\psi_i & := c_1(\mathbb{L}_i) \in A^1(\mbar_{g, n}) \\
\kappa_m & := {\pi_{n + 1}}_*\left(\psi_{n + 1}^{m + 1}\right) \in A^m(\mbar_{g, n})
\end{align*}

\end{Definition}

\begin{Theorem}[\cite{graber2003constructions}, Proposition 11]\label{additive}
The set
\begin{equation*}
\left\{{\xi_{\Gamma}}_*\left(\prod_{v \in V}\theta_v\right)\right\}
\end{equation*}
\noindent where $\Gamma$ is a stable graph, and $\theta_v$ is a monomial of $\psi$-classes and $\kappa$-classes on $\mbar_{g(v), n(v)}$, forms an additive generating set of $R^*(\mbar_{g, n})$.
\end{Theorem}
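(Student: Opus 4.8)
The plan is to prove both inclusions between $R^*(\mbar_{g,n})$ and the $\mathbb{Q}$-span $V_{g,n}$ of the proposed set $S_{g,n}$ of classes $(\xi_{\Gamma})_*(\prod_v \theta_v)$. First I would dispatch $V_{g,n} \subseteq R^*(\mbar_{g,n})$: the $\psi$-classes are tautological (write $\psi_i$ in terms of the relative dualizing sheaf and boundary divisors, each of which is a gluing-pushforward of a fundamental class), so every $\kappa_m = (\pi_{n+1})_*(\psi_{n+1}^{m+1})$ is tautological because $R^*$ is closed under $(\pi_{n+1})_*$, every monomial $\prod_v \theta_v$ is tautological because $R^*$ is a subalgebra, and applying $(\xi_{\Gamma})_*$ keeps us in $R^*$ by Definition \ref{tautological}.

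The substance is the reverse inclusion. By the minimality clause of Definition \ref{tautological} it suffices to show that $V_{g,n}$ contains the unit class and is closed under (i) products, (ii) pushforward along each forgetful map $\pi_i$, and (iii) pushforward along each gluing map $\xi_\Gamma$. The unit comes from the one-vertex, edgeless stable graph with $\theta_v = 1$. Item (iii) is formal: the composite $\xi_\Gamma \circ \prod_v \xi_{\Theta_v}$ is the gluing map $\xi_{\Gamma'}$ of the stable graph $\Gamma'$ obtained by grafting the graph $\Theta_v$ onto each vertex $v$ of $\Gamma$, so functoriality of proper pushforward turns the $\xi_\Gamma$-pushforward of an exterior product of generators $(\xi_{\Theta_v})_*(\cdots)$ back into a single generator $(\xi_{\Gamma'})_*(\cdots)$.

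For item (ii) I would use the commuting square relating $\pi_i$ to $\xi_\Gamma$ to reduce the pushforward of $(\xi_\Gamma)_*(\prod_v \theta_v)$ along $\pi_i$ to pushing the single monomial carried by the vertex incident to the forgotten leg forward along one forgetful morphism $\pi : \mbar_{h,k+1} \to \mbar_{h,k}$; I would then expand this pushforward with the standard comparison identities $\psi_j = \pi^*\psi_j + D_{j,k+1}$ and $\pi^*\kappa_m = \kappa_m - \psi_{k+1}^m$, the facts that $\pi$ restricts to an isomorphism on $D_{j,k+1}$ with $\psi_j|_{D_{j,k+1}} = \psi_{k+1}|_{D_{j,k+1}} = 0$, the defining identity $\pi_*(\psi_{k+1}^{m+1}) = \kappa_m$ together with $\pi_*(1) = 0$, and the projection formula; the result is always a $\mathbb{Q}$-linear combination of monomials in $\psi$- and $\kappa$-classes, hence lies in $V_{h,k}$ and, after re-applying $(\xi_\Gamma)_*$, in $V_{g,n}$. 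For item (i) I would compute $(\xi_\Gamma)_*(a)\cdot(\xi_{\Gamma'})_*(b)$ via the excess-intersection formula for the fibre product $\mbar_\Gamma \times_{\mbar_{g,n}} \mbar_{\Gamma'}$: this fibre product is a union of the loci $\mbar_{\Gamma''}$ indexed by common degenerations $\Gamma''$ of $\Gamma$ and $\Gamma'$, with excess normal contributions given by products of the node classes $-\psi' - \psi''$, so the product equals $\sum_{\Gamma''} (\xi_{\Gamma''})_*(\text{polynomial in pulled-back } \psi\text{- and } \kappa\text{-classes})$, which belongs to $V_{g,n}$ once item (ii) is applied on the individual factors.

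The hard part will be item (i): making the decomposition of $\mbar_\Gamma \times_{\mbar_{g,n}} \mbar_{\Gamma'}$ into common degenerations precise, with the correct excess normal bundle — in other words, the full boundary self-intersection calculus on $\mbar_{g,n}$ — and, inside item (ii), carefully tracking how $\psi$- and $\kappa$-classes transform under the stabilization morphisms that appear when forgetting a point destabilizes a rational component. Everything else is routine bookkeeping with the projection formula and functoriality of proper pushforward.
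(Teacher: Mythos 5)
The paper does not prove this statement --- it is quoted directly from Graber--Pandharipande (\cite{graber2003constructions}, Proposition 11) --- and your outline is essentially the argument given there: the easy inclusion via expressing $\psi$ and $\kappa$ as pushforwards of boundary classes, and the reverse inclusion by checking that the span of strata classes is closed under products (excess intersection along $\mbar_{\Gamma}\times_{\mbar_{g,n}}\mbar_{\Gamma'}$ with excess factors $-\psi'-\psi''$ over common edges), forgetful pushforwards (via the $\psi$- and $\kappa$-comparison formulas), and gluing pushforwards (by composing gluing maps). The sketch is sound and you have correctly isolated the two genuinely delicate points, namely the boundary self-intersection calculus and the stabilization behaviour of $\psi$- and $\kappa$-classes under forgetting a point.
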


\noindent A direct consequence of Theorem \ref{additive} is that, any tautological intersection number can be reduced to intersection numbers involving only $\psi$-classes and $\kappa$-classes. However, one can do even better than this: tautological intersection numbers involving $\kappa$-classes can be reduced to intersection numbers only involving $\psi$-classes. This is due to the following result:

\begin{Proposition}[\cite{zvonkine2014introduction}, Corollary 3.23]\label{eliminatekappa}
Let $Q$ be a polynomial in the variables $\kappa_m, \psi_1, \ldots, \psi_n$, and let $\tilde{Q}$ be the polynomial obtained from Q by the substitution $\kappa_i \mapsto \kappa_i - \psi_{n + 1}^i$. Then
\begin{equation*}
\int_{\mbar_{g, n}}\kappa_mQ = \int_{\mbar_{g, n + 1}}\psi_{n + 1}^{m + 1}\tilde{Q}
\end{equation*}
\end{Proposition}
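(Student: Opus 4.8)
The statement to prove is Proposition \ref{eliminatekappa}: for a polynomial $Q$ in $\kappa_m, \psi_1, \ldots, \psi_n$, one has $\int_{\mbar_{g,n}}\kappa_m Q = \int_{\mbar_{g,n+1}}\psi_{n+1}^{m+1}\tilde Q$, where $\tilde Q$ arises from $Q$ by the substitution $\kappa_i \mapsto \kappa_i - \psi_{n+1}^i$. The plan is to reduce everything to the projection formula for the forgetful morphism $\pi := \pi_{n+1}\colon \mbar_{g,n+1}\to\mbar_{g,n}$, together with two comparison identities: the pullback formula for $\psi$-classes under $\pi$, namely $\pi^*\psi_i = \psi_i - D_{i,n+1}$ where $D_{i,n+1}$ is the boundary divisor on which marked points $i$ and $n+1$ collide, and the comparison formula for $\kappa$-classes, $\pi^*\kappa_m = \kappa_m - \psi_{n+1}^m$. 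Both of these are standard and may be quoted from \cite{zvonkine2014introduction}.

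First I would record the key compatibility: if we write $\kappa_m^{(n)}$ for the class on $\mbar_{g,n}$ and $\kappa_m^{(n+1)}$ for the class on $\mbar_{g,n+1}$, then $\pi^*\kappa_m^{(n)} = \kappa_m^{(n+1)} - \psi_{n+1}^m$. More generally, for any polynomial $Q$ in the $\kappa$'s and the first $n$ $\psi$'s, $\pi^*Q$ equals the polynomial obtained by the substitutions $\kappa_m \mapsto \kappa_m - \psi_{n+1}^m$ and $\psi_i \mapsto \psi_i - D_{i,n+1}$; since $\psi_{n+1}\cdot D_{i,n+1} = 0$ (the restriction of $\psi_{n+1}$ to that boundary divisor vanishes, as the node forces a genus-$0$ three-pointed component), multiplying $\pi^*Q$ by $\psi_{n+1}^{m+1}$ with $m+1\geq 1$ kills every term containing a $D_{i,n+1}$. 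Hence $\psi_{n+1}^{m+1}\cdot\pi^*Q = \psi_{n+1}^{m+1}\tilde Q$ exactly, where $\tilde Q$ is $Q$ with $\kappa_i \mapsto \kappa_i - \psi_{n+1}^i$ as in the statement (the $\psi_i$ substitutions disappear). This is the one genuinely load-bearing computation and I expect it to be the main obstacle: one must be careful that the self-intersections of $D_{i,n+1}$ and the cross terms all vanish after multiplication by a positive power of $\psi_{n+1}$.

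With that in hand, the rest is the projection formula. By definition $\kappa_m = \pi_*(\psi_{n+1}^{m+1})$ on $\mbar_{g,n}$, so
\begin{equation*}
\int_{\mbar_{g,n}}\kappa_m Q = \int_{\mbar_{g,n}} \pi_*\!\left(\psi_{n+1}^{m+1}\right)\cdot Q = \int_{\mbar_{g,n}} \pi_*\!\left(\psi_{n+1}^{m+1}\cdot \pi^*Q\right) = \int_{\mbar_{g,n+1}} \psi_{n+1}^{m+1}\cdot\pi^*Q,
\end{equation*}
using the projection formula in the middle step and the fact that $\pi$ has relative dimension $1$ together with $\deg\circ\,\pi_* = \deg$ in the last. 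Substituting the identity $\psi_{n+1}^{m+1}\cdot\pi^*Q = \psi_{n+1}^{m+1}\tilde Q$ from the previous paragraph gives $\int_{\mbar_{g,n+1}}\psi_{n+1}^{m+1}\tilde Q$, which is the claim.

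I would close by remarking that the argument is purely formal once the two pullback formulas are granted; the only subtlety worth spelling out in the writeup is the vanishing $\psi_{n+1}|_{D_{i,n+1}} = 0$, which is what lets us drop the $\psi_i \mapsto \psi_i - D_{i,n+1}$ corrections and recover precisely the substitution $\kappa_i\mapsto\kappa_i-\psi_{n+1}^i$ stated in the proposition. Everything else — linearity in $Q$, reduction to monomials, the projection formula — is routine.
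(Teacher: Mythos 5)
Your argument is correct, and it is essentially the standard proof of this comparison: the paper itself offers no proof of Proposition \ref{eliminatekappa}, deferring entirely to \cite{zvonkine2014introduction}, Corollary 3.23, and the route you take (projection formula for $\pi_{n+1}$ plus the pullback formulas $\pi^*\kappa_a = \kappa_a - \psi_{n+1}^a$ and $\pi^*\psi_i = \psi_i - D_{i,n+1}$, with the vanishing $\psi_{n+1}\cdot D_{i,n+1}=0$ killing the divisor corrections) is exactly the argument given in that reference. You correctly identify the one load-bearing point, namely that every monomial of $\pi^*Q$ containing a factor $D_{i,n+1}$ dies after multiplication by $\psi_{n+1}^{m+1}$ because $\psi_{n+1}\cdot D_{i,n+1}=0$ already as a class, so no further care about self-intersections is needed. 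Nothing is missing.
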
 

\subsection{Computing $\left<\tau_{d_1}\ldots\tau_{d_n}\right>_g$}

\noindent Since all tautological intersection numbers reduce to intersection numbers only involving $\psi$-classes, we can restrict our attention to the rational numbers

\begin{equation}
\left<\tau_{d_1}\ldots\tau_{d_n}\right>_g := \int_{\mbar_{g, n}}\psi_1^{d_1}\ldots\psi_n^{d_n} 
\end{equation}

\noindent It now remains to compute these numbers. When $g = 0$, there is a closed form expression in terms of multinomial coefficients:

\begin{equation}\label{genuszero}
\left<\tau_{d_1}\ldots\tau_{d_n}\right>_0 = {n - 3 \choose d_1\ldots d_n}
\end{equation}

\noindent When a $\psi$-class at only one marked point occurs, there is also a closed form expression:

\begin{Lemma}\label{onepointed}

For $n \geq 1$, 

\begin{equation*}
\left<\tau_{3g - 3 + n}\tau_0^{n - 1}\right>_g = \frac{1}{24^gg!}
\end{equation*}

\end{Lemma}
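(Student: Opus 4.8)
The plan is to prove the identity $\left<\tau_{3g-3+n}\tau_0^{n-1}\right>_g = \frac{1}{24^g g!}$ by reducing everything to the single known genus-$g$, one-marked-point value and then peeling off the extra $\tau_0$ insertions with the string equation. First I would recall (or establish) the base case $n = 1$: the number $\left<\tau_{3g-2}\right>_g = \int_{\mbar_{g,1}}\psi_1^{3g-2} = \frac{1}{24^g g!}$. For $g = 1$ this is $\left<\tau_1\right>_1 = \frac{1}{24}$, the classical computation on $\mbar_{1,1}$, and for general $g$ this is a standard fact (it follows, e.g., from the Witten–Kontsevich theorem, or from the $\lambda_g\lambda_{g-1}$-type evaluations, or can simply be cited as the known value of the one-point $\psi$-intersection). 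Since the excerpt allows me to assume results stated earlier, and this value is exactly the normalization constant appearing throughout, I would treat the $n=1$ case as the anchor.

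Next I would induct on $n$. The string equation states
\begin{equation*}
\left<\tau_0 \tau_{d_1}\ldots\tau_{d_n}\right>_g = \sum_{i=1}^n \left<\tau_{d_1}\ldots\tau_{d_i - 1}\ldots\tau_{d_n}\right>_g,
\end{equation*}
where terms with a negative index are dropped. Apply this with the insertions $\tau_{3g-3+n}\tau_0^{n-1}$: removing one $\tau_0$ and lowering indices, every term in which we lower an already-zero index vanishes, so the only surviving term on the right comes from lowering the single large insertion $\tau_{3g-3+n}$ to $\tau_{3g-4+n} = \tau_{3g-3+(n-1)}$. Hence
\begin{equation*}
\left<\tau_{3g-3+n}\tau_0^{n-1}\right>_g = \left<\tau_{3g-3+(n-1)}\tau_0^{n-2}\right>_g,
\end{equation*}
and the right-hand side is the $(n-1)$-marked-point case, which by the inductive hypothesis equals $\frac{1}{24^g g!}$. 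This closes the induction and gives the claim for all $n \geq 1$.

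The only genuine subtlety — the step I expect to be the main obstacle — is making sure the boundary case of the string equation is handled correctly when $n = 1$ (so there are no $\tau_0$'s to remove) versus $n \geq 2$: for $n \geq 2$ one must check carefully that the string equation is being applied in a range where it is valid (i.e. $2g - 2 + (n-1) > 0$, which holds since $g \geq 1$) and that the dimension constraint $\sum d_i = 3g - 3 + n$ is respected at each stage, which it is by construction since lowering one index by one exactly matches dropping one marked point. The other genuine input is the base value $\left<\tau_{3g-2}\right>_g = \frac{1}{24^g g!}$; if one does not wish to cite it, one can instead run the string/dilaton recursion all the way down, but that merely relocates the same fact, so I would cite it. Everything else is bookkeeping.
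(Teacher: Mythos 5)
Your argument is correct, but it is worth noting that the paper does not actually prove this lemma at all: it simply cites Kock's notes (Section 3.3, Example 3.3.5) for the whole statement. Your proof is therefore genuinely more explicit. You isolate the one nontrivial input, namely the one-point value $\left<\tau_{3g-2}\right>_g = \frac{1}{24^g g!}$ (which does require a citation or a real computation --- Witten--Kontsevich, the KdV recursion, or a direct Hodge-integral evaluation), and then reduce the general case to it by a clean induction on $n$ using the string equation: removing a $\tau_0$ forces the lowered index to land on the unique large insertion, since lowering any other $\tau_0$ produces a $\tau_{-1}$ term that is dropped, and the dimension constraint $3g-3+(n-1)$ is preserved exactly. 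This is the standard way the statement is derived in the literature, and your check that $2g-2+(n-1)>0$ along the way is the right thing to verify. The only caveat is that your induction bottoms out at $n=1$, which implicitly assumes $g \geq 1$ (there is no $\mbar_{0,1}$); if one insists on $g=0$ the statement instead follows directly from the closed-form genus-zero formula $\left<\tau_{n-3}\tau_0^{n-1}\right>_0 = \binom{n-3}{n-3,0,\ldots,0} = 1$, so nothing is lost, but you should say so if the lemma is meant to cover $g=0$. In short: where the paper buys brevity with a citation, your route buys a self-contained reduction that makes clear the entire content of the lemma lives in the single number $\left<\tau_{3g-2}\right>_g$.
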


\begin{proof}
The proof can be found in (\cite{kock2001notes} Section 3.3, Example 3.3.5)
\end{proof}

\noindent For $g > 0$, there are various recursions that completely determine $\left<\tau_{d_1}\ldots\tau_{d_n}\right>_g$. First, we recall the \emph{String Equation} and the \emph{Dilaton Equation}:

\begin{equation*}
\text{\emph{The String Equation:}} \ \left<\tau_{d_1}, \ldots, \tau_{d_n}\tau_0\right>_g = \sum_{i = 1}^n\left<\tau_{d_1}\ldots\tau_{d_i - 1}\ldots\tau_{d_n}\right>_g
\end{equation*}

\begin{equation*}
\text{\emph{The Dilaton Equation:}} \ \left<\tau_{d_1}\ldots\tau_{d_n}\tau_1\right>_g = (2g - 2 + n)\left<\tau_{d_1}\ldots\tau_{d_n}\right>_g
\end{equation*}

\noindent When one wants to compute $\left<\tau_{d_1}\ldots\tau_{d_n}\right>_g$, and $d_i \leq 1$ for all $i$, the String Equation and Dilaton Equation will suffice. However, if $d_i \geq 2$ for some $i$, one needs the so-called \emph{Virasoro constraints}:

\begin{Theorem}[Virasoro Constraints]\label{virasoro}
Let $m \geq 1$. Then

\begin{align*}
(2m + 3)!!\left<\tau_{m + 1}\tau_{d_1}\ldots\tau_{d_n}\right>_g = & \sum_{i = 1}^n\frac{(2d_i + 1 + 2m)!!}{(2d_i - 1)!!}\left<\tau_{d_1}\ldots\tau_{d_i + m}\ldots\tau_{d_n}\right> \\
& + \frac{1}{2}\sum_{a + b = m - 1}(2a + 1)!!(2b + 1)!!\left<\tau_a\tau_b\tau_{d_1}\ldots\tau_{d_n}\right>_{g - 1} \\
& + \frac{1}{2}\sum_{\substack{a + b = m - 1 \\ I \amalg J = \{1, \ldots, n\} \\ g_1 + g_2 = g}}(2a + 1)!!(2b + 1)!!\left<\tau_a\prod_{i \in I}\tau_{d_i}\right>_{g_1}\left<\tau_b\prod_{i \in J}\tau_{d_i}\right>_{g_2}
\end{align*}

\end{Theorem}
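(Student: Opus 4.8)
The stated recursion is the Dijkgraaf--Verlinde--Verlinde form of the Virasoro constraints for $\psi$-intersection numbers, and is equivalent to the Witten--Kontsevich theorem, so a fully self-contained proof is substantial. The route I would take is the geometric one through Mirzakhani's recursion for Weil--Petersson volumes. First I would package the numbers into the generating function $Z = \exp F$, where $F = \sum_{g, n}\frac{1}{n!}\sum_{d_1, \ldots, d_n}\left<\tau_{d_1}\ldots\tau_{d_n}\right>_g\, t_{d_1}\cdots t_{d_n}$ with the standard genus bookkeeping, and observe that the statement of Theorem \ref{virasoro} is equivalent to $L_m Z = 0$ for all $m \geq 1$, where $L_m$ is the second-order differential operator whose shape, up to a normalization of the $t_i$ that absorbs powers of $2$, is
\begin{equation*}
L_m = -\tfrac{1}{2}(2m+3)!!\,\partial_{m+1} + \sum_{i \geq 0}\frac{(2i + 2m + 1)!!}{2(2i-1)!!}\,t_i\,\partial_{i + m} + \frac{1}{4}\sum_{a + b = m - 1}(2a+1)!!(2b+1)!!\,\partial_a\partial_b .
\end{equation*}
Extracting the coefficient of a monomial in the $t_i$ from $L_m Z = 0$ returns exactly the displayed identity: the last two sums in the Theorem (the $\left<\ldots\right>_{g-1}$ term and the product term) arise from $\partial_a\partial_b$ acting on $\exp F$ via $\partial_a\partial_b(\exp F) = \bigl((\partial_a\partial_b F) + (\partial_a F)(\partial_b F)\bigr)\exp F$, which is precisely the split into a non-separating boundary node and a separating one.

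Next I would bring in Mirzakhani's theorem that the Weil--Petersson volume $V_{g, n}(L_1, \ldots, L_n)$ is a polynomial in $L_1^2, \ldots, L_n^2$ whose component of top total degree $6g - 6 + 2n$ equals $\sum_{|\vec d| = 3g - 3 + n}\left<\tau_{d_1}\ldots\tau_{d_n}\right>_g\prod_i \frac{L_i^{2d_i}}{2^{d_i}d_i!}$, together with Mirzakhani's integration (recursion) formula, which expresses $\partial_{L_1}\bigl(L_1 V_{g, n}(L)\bigr)$ as a sum of three contributions: one built from $V_{g - 1, n + 1}$, one from products $V_{g_1, |I| + 1}\,V_{g_2, |J| + 1}$ over stable splittings, and one from $\sum_{j \geq 2}V_{g, n - 1}$, each integrated against explicit kernel functions. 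The plan is then to pass to the part of top total degree in $L_1, \ldots, L_n$ on both sides; the lower-order terms (which carry the $\kappa_1$-contributions, i.e. the $2\pi^2$ factors) drop out, the kernel integrals $\int_0^\infty x^{2k+1}(\text{kernel})\,dx$ collapse to the double factorials $(2m + 3)!!$, $(2a+1)!!(2b+1)!!$, and $(2d_i + 2m + 1)!!/(2d_i - 1)!!$, and matching the resulting polynomial identity coefficient by coefficient yields the recursion of Theorem \ref{virasoro}.

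The main obstacle, and essentially the only nonroutine part, is this last step: verifying that the top-degree parts of Mirzakhani's kernels produce exactly these double factorials, that the integral transforms send $L^{2k}$ to the correct rational multiples of the required monomials, and that the $\tfrac12$ symmetry factors and the $1/n!$ normalizations line up. This is a finite residue / Gamma-function computation, and is in substance the content of the Mulase--Safnuk and Liu--Xu identifications of Mirzakhani's recursion with the Witten--Kontsevich/DVV recursion. Two alternative routes of comparable rigor: (i) replace the Mirzakhani input by the Eynard--Orantin topological recursion on the Airy spectral curve $x = \tfrac{1}{2}y^2$, whose invariants $\omega_{g, n}$ have the $\left<\tau_{d_1}\ldots\tau_{d_n}\right>_g$ as expansion coefficients and whose abstract loop equations are precisely the Virasoro constraints $L_m Z = 0$; or (ii) invoke the Witten--Kontsevich theorem itself, deriving $L_m Z = 0$ for all $m \geq -1$ from the KdV hierarchy together with the String Equation (the $L_{-1}$ constraint) via the Kac--Schwarz argument, at the cost of the harder analytic input that $Z$ is a KdV tau-function.
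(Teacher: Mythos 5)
The paper does not actually prove Theorem \ref{virasoro}: its ``proof'' is the single line ``See \cite{zvonkine2014introduction}, Section 4.2'', where the DVV recursion is obtained from the Witten--Kontsevich theorem (the KdV hierarchy for $Z = \exp F$ together with the string equation). Your proposal therefore does strictly more than the paper, and the route you foreground --- Mirzakhani's recursion for Weil--Petersson volumes, restricted to the top-degree part in the boundary lengths $L_i$ --- is a genuinely different, geometric path to the same identity; your alternative (ii) is essentially the paper's own route. Your translation of the statement into $L_m Z = 0$ is correct: with your normalization the coefficient extraction gives $-\tfrac{1}{2}(2m+3)!!$ on the linear term and $\tfrac{1}{4}$ on the quadratic term, and multiplying through by $-2$ reproduces the displayed recursion, with $\partial_a\partial_b(\exp F) = \bigl((\partial_a\partial_b F) + (\partial_a F)(\partial_b F)\bigr)\exp F$ correctly accounting for the nonseparating versus separating degenerations. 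The honest caveat is that what you have written is a proof plan rather than a proof: the two genuinely hard inputs --- Mirzakhani's integration formula itself, and the verification that the top-degree parts of her kernels integrate to exactly the double factorials $(2m+3)!!$, $(2d_i + 2m + 1)!!/(2d_i - 1)!!$ and $(2a+1)!!(2b+1)!!$ with the correct $\tfrac{1}{2}$ symmetry factors --- are named and outsourced to Mulase--Safnuk and Liu--Xu rather than carried out. Since the paper outsources the entire theorem to the literature, this is not a defect relative to the paper; but a self-contained write-up would need the residue computation in that last step made explicit, as it is the only place where the specific double-factorial coefficients of the statement are actually produced.
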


\begin{proof}
See \cite{zvonkine2014introduction}, Section 4.2
\end{proof}

\noindent So, in summary, with the closed from expression in genus zero (Equation \ref{genuszero}), the String Equation, the Dilaton Equation, the closed formula for intersection numbers with $\psi$-classes supported at one marked point (Lemma \ref{onepointed}), and the Virasoro constraints (Theorem \ref{virasoro}), one can compute any intersection number $\left<\tau_{d_1}\ldots\tau_{d_n}\right>_g$.

\section{Proof of Theorem \ref{MainTheorem}}\label{IPP}

\noindent Throughout this section, we use the following shorthand notation:

\begin{itemize}
\item{$\displaystyle \vec{d} := (d_1, \ldots, d_n) \in \mathbb{Z}^n_{\geq 0}$}
\item{$\displaystyle |\vec{d}| := \sum_{i = 1}^n d_i$}
\item{$\displaystyle C(\vec{d}) := \prod_{i = 1}^n(2d_i + 1)!!$}
\item{$\tau_{\vec{d}} := \tau_{d_1}\ldots\tau_{d_n}$}
\item{$d_{n + 1} = d_{n + 1}(g, |\vec{d}|) := 3g - 2 + n - |\vec{d}|$}
\item{$\displaystyle L_{\vec{d}}(g) := 24^gg!C(\vec{d})\left<\tau_{\vec{d}}\tau_{d_{n + 1}}\right>_g$}
\item{$m = m(\vec{d}) := \left\lceil \frac{2 - n + |\vec{d}|}{3} \right\rceil - 1$}
\end{itemize}

\noindent In the case that $n = 0$, $\vec{d}$ is the empty vector, which we denote by $\vec{d} = \varnothing$. We define

\begin{equation*}
C(\varnothing) := 1
\end{equation*}

\noindent By Lemma \ref{onepointed}, we have

\begin{equation*}
L_{\varnothing}(g) = 24^gg!\left<\tau_{3g - 2}\tau_0^n\right>_g = 1
\end{equation*}

\noindent The proof of Theorem \ref{MainTheorem} consists of three parts. We begin by showing that

\begin{equation*}
L_{\vec{d}}(g) := 24^gg!C(\vec{d})\left<\tau_{\vec{d}}\tau_{3g - 2 + n - |\vec{d}|}\right>_g
\end{equation*}

\noindent is an integer-valued polynomial in $g$ whose leading coefficient is $6^{|\vec{d}|}$. This is proven by carefully examining the recursions that determine $L_{\vec{d}}(g)$, and making sure that the property of being an integer-valued polynomial with leading coefficient $6^{|\vec{d}|}$ is preserved under these recursions. \\

Once this is established, we can consider the shifted polynomial

\begin{equation*}
L_{\vec{d}}(g + m(\vec{d})) = 24^{g + m(\vec{d})}(g + m(\vec{d}))!C(\vec{d})\left<\tau_{d_1}\ldots\tau_{d_n}\tau_{3(g + m(\vec{d})) - 2 + n - |\vec{d}|}\right>_{g + m(\vec{d})}
\end{equation*}

\noindent (see the explanation directly following the statement of Theorem \ref{MainTheorem} for a justification as to why $m(\vec{d})$ is a natural shift of the genera) The second part of the proof is to show that $L_{\vec{d}}(g + m)$ has a nonnegative $f^*$-vector. The strategy is the same as in the first part, that is, we make sure that the property of having a nonnegative $f^*$-vector is preserved under the recursions that determine $L_{\vec{d}}(g + m)$. \\

\noindent The final part of the proof is to apply Breuer's theorem (Theorem \ref{Breuer}), which ensures that one can always find a partial polytopal complex $P_{\vec{d}}$ whose Ehrhart polynomial is $L_{\vec{d}}(g + m)$.

\subsection{$L_{\vec{d}}(g)$ Is An Integer-Valued Polynomial}

\noindent Consider the intersection number

\begin{equation*}
L_{\vec{d}}(g) := 24^gg!C(\vec{d})\left<\tau_{d_1}\ldots\tau_{d_n}\tau_{d_{n + 1}}\right>_g
\end{equation*}

\noindent The String Equation and the Dilaton Equation implies:

\begin{Lemma}[String and Dilaton Equation for $L_{\vec{d}}(g)$]\label{stringanddilaton}

The String Equation and the Dilaton Equation, respectively, imply that

\begin{align*}
L_{\vec{d}\cup\{0\}}(g) & = \left(\sum_{i = 1}^n(2d_i + 1)L_{\vec{d}\setminus\{d_i\}\cup\{d_i - 1\}}(g)\right) + L_{\vec{d}}(g) \\
L_{\vec{d}\cup\{1\}}(g) & = (6g - 3 + 3n)L_{\vec{d}}(g)
\end{align*}

\end{Lemma}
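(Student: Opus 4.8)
The plan is to take the String and Dilaton Equations as stated in Theorem \ref{virasoro}'s neighbors (the displayed equations in Section \ref{Tautological}) and simply substitute them into the definition $L_{\vec{d}}(g) := 24^gg!C(\vec{d})\left<\tau_{\vec{d}}\tau_{d_{n+1}}\right>_g$, tracking how the prefactors $24^gg!$, $C(\vec{d})$, and the last exponent $d_{n+1} = 3g - 2 + n - |\vec{d}|$ transform. The key observation is that adjoining a $\tau_0$ or a $\tau_1$ to $\tau_{\vec{d}}$ changes $n \mapsto n+1$ but does \emph{not} change the genus $g$ or the prefactor $24^gg!$; it only multiplies $C$ by the appropriate double-factorial and shifts the exponent on the last marked point.

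For the String Equation: I would start from $L_{\vec{d}\cup\{0\}}(g) = 24^gg!\,C(\vec{d}\cup\{0\})\left<\tau_{\vec{d}}\tau_0\tau_{d_{n+1}'}\right>_g$ where $d_{n+1}'$ is the exponent forced by the new marked count $n+1$. Since $C(\vec{d}\cup\{0\}) = C(\vec{d})\cdot(2\cdot 0 + 1)!! = C(\vec{d})$, the combinatorial prefactor is unchanged. Applying the String Equation to $\left<\tau_{\vec{d}}\tau_0\tau_{d_{n+1}'}\right>_g$ distributes a derivative across the $n+1$ remaining insertions $\tau_{d_1},\ldots,\tau_{d_n},\tau_{d_{n+1}'}$: each term $\left<\tau_{d_1}\ldots\tau_{d_i-1}\ldots\right>_g$ when repackaged as an $L$ must be re-multiplied by the right double-factorial, and the crucial bookkeeping is that $\frac{C(\vec{d}\setminus\{d_i\}\cup\{d_i-1\})}{C(\vec{d})} = \frac{(2d_i-1)!!}{(2d_i+1)!!} = \frac{1}{2d_i+1}$, so to convert the term back to the normalization used by $L$ we must multiply by $2d_i+1$ — producing exactly the coefficients $2d_i+1$ in the claimed formula. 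The term coming from lowering the exponent on the last insertion $\tau_{d_{n+1}'}$ reproduces $L_{\vec{d}}(g)$ itself (here one checks the index arithmetic: lowering $d_{n+1}'$ by one matches the exponent $d_{n+1}$ appropriate for $n$ marked points with the same $g$), giving the trailing $+L_{\vec{d}}(g)$ term.

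For the Dilaton Equation: starting from $L_{\vec{d}\cup\{1\}}(g) = 24^gg!\,C(\vec{d}\cup\{1\})\left<\tau_{\vec{d}}\tau_1\tau_{d_{n+1}''}\right>_g$ with $C(\vec{d}\cup\{1\}) = C(\vec{d})\cdot 3!! = 3\,C(\vec{d})$, I apply $\left<\tau_{\vec{d}}\tau_{d_{n+1}''}\tau_1\right>_g = (2g-2+(n+1))\left<\tau_{\vec{d}}\tau_{d_{n+1}''}\right>_g = (2g-1+n)\left<\tau_{\vec{d}}\tau_{d_{n+1}}\right>_g$, where again one must verify that $d_{n+1}'' = d_{n+1}$, i.e. that the exponent forced on the last insertion is the same before and after removing the $\tau_1$ (it is, since removing a $\psi$-free-of-power... — more precisely, removing the $\tau_1$ drops $n$ by one and drops the dimension count so that $3g-2+n-|\vec{d}|$ is unchanged). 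Collecting: $L_{\vec{d}\cup\{1\}}(g) = 3(2g-1+n)\cdot 24^gg!\,C(\vec{d})\left<\tau_{\vec{d}}\tau_{d_{n+1}}\right>_g = (6g-3+3n)L_{\vec{d}}(g)$, as claimed.

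I do not anticipate a genuine obstacle here — this lemma is pure bookkeeping — but the one step requiring care, and the place where a sign or off-by-one error is most likely to creep in, is the index arithmetic on the last insertion: one must confirm in each case that the exponent $3g-2+n-|\vec{d}|$ really is the exponent forced by the dimension constraint both before and after the String/Dilaton move, so that the terms genuinely reassemble into $L_{\vec{d}}(g)$ rather than into some $L$ with a shifted genus. I would verify this explicitly once for String (where the last insertion's exponent drops by one precisely to compensate for $n\mapsto n+1$) and once for Dilaton, and then the two displayed identities follow immediately.
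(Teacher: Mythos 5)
Your proposal is correct and follows essentially the same route as the paper's proof: substitute the definition of $L$, use $C(\vec{d}\cup\{0\}) = C(\vec{d})$ and $C(\vec{d}\cup\{1\}) = 3\,C(\vec{d})$, extract the factor $2d_i+1$ from the double-factorial ratio, and check that the forced exponent on the last insertion reassembles each term into the appropriate $L$ at the same genus. The index arithmetic you flag as the delicate point is exactly the bookkeeping the paper carries out, and it works out as you describe.
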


\begin{proof}
Indeed, we have

\begin{align*}
L_{\vec{d}\cup\{0\}}(g) & = 24^gg!C(\vec{d})\left<\tau_{\vec{d}}\tau_0\tau_{3g - 2 + (n + 1) - |\vec{d}|}\right>_g \\
& = 24^gg!C(\vec{d})\left[\sum_{i = 1}^n\left<\tau_{d_1}\ldots\tau_{d_i - 1}\ldots\tau_{d_n}\tau_{3g - 2 + n - (|\vec{d}|) - 1}\right>_g + \left<\tau_{\vec{d}}\tau_{3g - 2 + n - |\vec{d}|}\right>_g\right] \\
& = \left(\sum_{i = 1}^n24^gg!(2d_i + 1)C(d_1, \ldots, d_i - 1, \ldots d_n)\left<\tau_{d_1}\ldots\tau_{d_i - 1}\ldots\tau_{d_n}\tau_{3g - 2 + n - (|\vec{d}| - 1)}\right>_g\right) \\
& + 24^gg!C(\vec{d})\left<\tau_{\vec{d}}\tau_{3g - 2 + n - |\vec{d}|}\right>_g \\
& = \left(\sum_{i = 1}^n(2d_i + 1)L_{\vec{d}\setminus\{d_i\}\cup\{d_i - 1}(g)\right) + L_{\vec{d}}(g)
\end{align*}

\noindent Furthermore,

\begin{align*}
L_{\vec{d}\cup\{1\}}(g) & = 24^gg!C(\vec{d}\cup\{1\})\left<\tau_{\vec{d}}\tau_1\tau_{3g - 2 + (n + 1) - (|\vec{d}| + 1)}\right>_g \\
& = 24^gg!C(\vec{d})3!!(2g - 2 + (n + 1))\left<\tau_{\vec{d}}\tau_{3g - 2 + n - |\vec{d}|}\right>_g \\
& = 3(2g -1+ n)L_{\vec{d}}(g) \\
& = (6g - 3 + 3n)L_{\vec{d}}(g)
\end{align*} 

\end{proof}

\noindent Now consider the case that there exists some $d_i$, say $d_1$, such that $d_1 \geq 2$. We can use the Virasoro constraints (Theorem \ref{virasoro}) to evaluate $L_{\vec{d}}(g)$ by letting $d_1$ play the role of $`m + 1'$ , so that $m = d_1 - 1$. Multiplying through by $24^gg!C(\vec{d})$, and dividing by $(2(d_1 - 1) + 3)!! = (2d_1 + 1)!!$, the Virasoro constraints tell us that

\begin{equation}\label{allterms}
L_{\vec{d}}(g) = \frac{1}{(2d_1 + 1)!!}24^gg!C(\vec{d})\left(T_1(\vec{d}) + T_2(\vec{d}) + T_3(\vec{d}) + T_4(\vec{d})\right)
\end{equation}

\noindent where

\begin{align*}
T_1(\vec{d}) & :=  \sum_{i = 2}^n\frac{(2d_i + 2(d_1 - 1) + 1)!!}{(2d_i - 1)!!}\left<\tau_{d_2}\ldots\tau_{d_i + d_1 - 1}\ldots\tau_{d_n}\tau_{d_{n + 1}}\right>_g \\
T_ 2(\vec{d}) & := \frac{(2(3g - 2 + n - |\vec{d}|) + 2(d_1 - 1) + 1)!!}{(2(3g - 2 + n - |\vec{d}|) - 1)!!}\left<\tau_{d_2}\ldots\tau_{d_n}\tau_{d_{n + 1} + d_1 - 1}\right>_g \\
T_3(\vec{d}) & := \frac{1}{2}\sum_{a + b = d_1 - 2}(2a + 1)!!(2b + 1)!!\left<\tau_a\tau_b\tau_{d_2}\ldots\tau_{d_n}\tau_{d_{n + 1}}\right>_{g - 1} \\
T_4(\vec{d}) & := \frac{1}{2}\sum_{\substack{a + b = d_1 - 2 \\ I \amalg J = \{2, \ldots, n + 1\} \\ g_1 + g_2 = g}}(2a + 1)!!(2b + 1)!!\left<\tau_a\tau_I\right>_{g_1}\left<\tau_b\tau_J\right>_{g_2}
\end{align*}

\noindent In the term $T_4(\vec{d})$, we are using the shorthand notation

\begin{equation*}
I \amalg J = \{2, \ldots, n + 1\} \implies \tau_I := \prod_{i \in I}\tau_{d_i} \ \tau_J := \prod_{i \in J}\tau_{d_i} 
\end{equation*}

\noindent Our goal now is express the right hand side of Equation \ref{allterms} in terms of $L_{\vec{\ell}}(g)$, where $|\vec{\ell}| < |\vec{d}|$. Let's begin with $T_1(\vec{d})$. First, define

\begin{equation*}
\vec{d}(i) := (d_2, d_3, \ldots, d_{i - 1}, d_i + d_1 - 1, d_{i + 1}, \ldots d_n)
\end{equation*}

\noindent Then

\begin{align*}
C(\vec{d}(i)) & = \frac{C(\vec{d})}{(2d_1 + 1)!!} \cdot \frac{(2(d_i + d_1 - 1) + 1)!!}{(2d_i + 1)!!} \\
& = \frac{C(\vec{d})}{(2d_1 + 1)!!} \cdot \frac{(2d_i + (2d_1 - 1))!!}{(2d_i + 1)!!}
\end{align*}

\noindent which implies that

\begin{equation*}
(2d_i + 1)C(\vec{d}(i)) = \frac{C(\vec{d})}{(2d_1 + 1)!!} \cdot \frac{(2d_i + (2d_1 - 1))!!}{(2d_i - 1)!!}
\end{equation*}

\noindent Therefore, we have

\begin{equation}\label{term1}
\frac{C(\vec{d})}{(2d_1 + 1)!!}24^gg!T_1(\vec{d}) = \sum_{i = 2}^n(2d_i + 1)L_{\vec{d}(i)}(g)
\end{equation}

\noindent For the term involving $T_2(\vec{d})$, we have

\begin{align*}
\frac{(2(3g - 2 + n - |\vec{d}|) + 2(d_1 - 1) + 1)!!}{(2(3g - 2 + n - |\vec{d}|) - 1)!!} & = \frac{(6g - 4 + 2n - 2|\vec{d}| + (2d_1 - 1))!!}{(6g - 4 + 2n - 2|\vec{d}| - 1)!!} \\
& = \prod_{k = 1}^{d_1}(6g - 4 + 2n - 2|\vec{d}| + (2k - 1))
\end{align*}

\noindent and therefore,

\begin{equation}\label{term2}
\frac{C(\vec{d})}{(2d_1 + 1)!!}24^gg!T_2(\vec{d}) = \left(\prod_{k = 1}^{d_1}(6g - 4 + 2n - 2|\vec{d}| + (2k - 1))\right)L_{\vec{d}\setminus\{d_1\}}(g)
\end{equation}

\noindent For the term involving $T_3(\vec{d})$,

\begin{align*}
\frac{1}{(2d_1 + 1)!!}24^gg!C(\vec{d})T_3(\vec{d}) = 24g\left(\frac{1}{2}\sum_{a + b = d_1 - 2}24^{g - 1}(g - 1)!C(\vec{d}\setminus\{d_1\}\cup\{a, b\})\left<\tau_a\tau_b\tau_{d_2}\ldots\tau_{d_n}\tau_{d_{n + 1}}\right>_{g - 1}\right)
\end{align*}

\noindent The expression $d_{n + 1} = 3g - 2 + n - |\vec{d}|$ can be written as

\begin{equation*}
3(g - 1) - 2 + (n + 1) - (|\vec{d}| - d_1 + a + b)
\end{equation*}

\noindent and therefore, 

\begin{equation}\label{term3}
\frac{1}{(2d_1 + 1)!!}24^gg!C(\vec{d})T_3(\vec{d}) = 12g\sum_{a + b = d_1 - 2}L_{\vec{d}\setminus\{d_1\}\cup\{a, b\}}(g - 1)
\end{equation}

\noindent Finally, we consider the term containing $T_4(\vec{d})$. Using the symmetry of the summation, we can get rid of the factor of $\frac{1}{2}$:

\begin{align*}
T_4 = \frac{1}{2} & \sum_{\substack{a + b = d_1 - 2 \\ I \amalg J = \{2, \ldots, n + 1\} \\ g_1 + g_2 = g}}(2a + 1)!!(2b + 1)!!\left<\tau_a\tau_I\right>_{g_1}\left<\tau_b\tau_J\right>_{g_2} \\
= \frac{1}{2} & \left(\sum_{\substack{a + b = d_1 - 2 \\ I \amalg J = \{2, \ldots, n\} \\ g_1 + g_2 = g}}(2a + 1)!!(2b + 1)!!\left<\tau_a\tau_I\tau_{d_{n + 1}}\right>_{g_1}\left<\tau_b\tau_J\right>_{g_2} \right. \\
& \left. + \sum_{\substack{a + b = d_1 - 2 \\ I \amalg J = \{2, \ldots, n\} \\ g_1 + g_2 = g}} (2a + 1)!!(2b + 1)!!\left<\tau_a\tau_I\right>_{g_1}\left<\tau_b\tau_J\tau_{d_{n + 1}}\right>_{g_2}\right) \\
= & \sum_{\substack{a + b = d_1 - 2 \\ I \amalg J = \{2, \ldots, n\} \\ g_1 + g_2 = g}}(2a + 1)!!(2b + 1)!!\left<\tau_a\tau_I\right>_{g_1}\left<\tau_b\tau_J\tau_{d_{n + 1}}\right>_{g_2}
\end{align*} 

\noindent For any partition $I \amalg J = \{2, \ldots, n\}$, and for any pair $(g_1, g_2)$ such that $g_1 + g_2 = g$, we have

\begin{align*}
\frac{C(\vec{d})}{(2d_1 + 1)!!} & = C(\vec{d}_I)C(\vec{d}_J) \\
24^g & = 24^{g_1}24^{g_2} \\
g! & = g_1!g_2!{g \choose g_1}
\end{align*}

\noindent Therefore,

\begin{align*}
\frac{C(\vec{d})}{(2d_1 + 1)!!}24^gg!T_4(\vec{d}) & = \sum_{\substack{a + b = d_1 - 2 \\ I \amalg J = \{2, \ldots, n\} \\ g_1 + g_2 = g}}(2a + 1)!!24^{g_1}g_1!C(\vec{d}_I)\left<\tau_a\tau_I\right>_{g_1}(2b + 1)!!24^{g_2}g_2!C(\vec{d}_J)\left<\tau_b\tau_J\tau_{d_{n + 1}}\right>_{g_2}{g \choose g_1} \\
& = \sum_{\substack{a + b = d_1 - 2 \\ I \amalg J = \{2, \ldots, n\} \\ g_1 + g_2 = g}}(2a + 1)!!L_{\vec{d}_I}(g_1)L_{\vec{d}_J\cup\{b\}}(g_2){g \choose g_1}
\end{align*}

\noindent Now, notice that, for any given pairs $(a, b)$ and $(I, J)$ such that $a + b = d_1 - 2$ and $I \amalg J = \{2, \ldots, n\}$, there exists \emph{at most} one pair $(g_1, g_2) = (g_1, g - g_1)$ such that $L_{\vec{d}_I}(g_1)$ and $L_{\vec{d}_J}(g_2)$ are simultaneously non-zero. In fact, it is a simple computation to determine $g_1$ (and consequently $g_2 = g - g_1$) as a function of $I$ and $a$:

\begin{equation*}
3g_1 - 3 + 1 + |I| = a + |\vec{d}_I| \implies g_1 = \frac{1}{3}(a + |\vec{d}_I| + 2 - |I|)
\end{equation*} 

\begin{Remark}
Implicit in our explanation is the assumption that $\left<\tau_a\tau_I\right>_{\frac{1}{3}(a + |\vec{d}_I| + 2 - |I|)}$ is zero whenever $\frac{1}{3}(a + |\vec{d}_I| + 2 - |I|)$ is not a positive integer. Geometrically, this is just the simple statement that we are excluding the possibility of fractional genera. 
\end{Remark}

\noindent This is all to say that

\begin{equation}\label{term4}
\frac{C(\vec{d})}{(2d_1 + 1)!!}24^gg!T_4(\vec{d}) = \sum_{\substack{a + b = d_1 - 2 \\ I \amalg J = \{2, \ldots, n\}}} (2a + 1)!!L_{\vec{d}_I}\left(g_1(I, a)\right)L_{\vec{d}_J}\left(g - g_1(I, a)\right){g \choose g_1(I, a)}
\end{equation}

\noindent where $g_1(I, a) := \frac{1}{3}(a + |\vec{d}_I| + 2 - |I|)$. Putting all of these calculations together, we obtain the following recursion:

\begin{Proposition}[Virasoro Constraints for $L_{\vec{d}}(g)$]\label{virasoroforL}
Let $\vec{d} := (d_1, \ldots, d_n) \in \mathbb{Z}^n_{\geq 0}$ where $d_1 \geq 2$. We have

\begin{align*}
L_{\vec{d}}(g) & = \sum_{i = 2}^n(2d_i + 1)L_{\vec{d}(i)}(g) + \left(\prod_{k = 1}^{d_1}(6g - 4 + 2n - 2|\vec{d}| + (2k - 1))\right)L_{\vec{d}\setminus\{d_1\}}(g) \\
& + 12g\sum_{a + b = d_1 - 2}L_{\vec{d}\setminus\{d_1\}\cup\{a, b\}}(g - 1) \\
& + \sum_{\substack{a + b = d_1 - 2 \\ I \amalg J = \{2, \ldots, n\}}} (2a + 1)!!L_{\vec{d}_I}(g_1)L_{\vec{d}_J\cup\{b\}}(g - g_1){g \choose g_1}
\end{align*}

\noindent where, in the last summation on the right hand side, 

\begin{align*}
g_1 & := \frac{1}{3}\left(a + |\vec{d}_I| + 2 - |I|\right) \\
L_{\vec{d}_I}(g_1) & := \begin{cases}
L_{\vec{d}_I}\left(\frac{1}{3}(a + |\vec{d}_I| + 2 - |I|)\right), \quad a + |\vec{d}_I| + 2 - |I| \equiv 0 \ \text{mod} \ 3 \\
0, \quad \text{otherwise}
\end{cases}
\end{align*}

\end{Proposition}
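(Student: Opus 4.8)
The plan is to derive the recursion by a direct translation of the Virasoro constraints (Theorem \ref{virasoro}), specialized so that the distinguished insertion of index $m+1$ is the marked point carrying $d_1$ (hence $m = d_1 - 1 \geq 1$, which is exactly where Virasoro applies). The bookkeeping has already been set up in the excerpt: after multiplying the Virasoro relation by $24^g g! C(\vec{d})$ and dividing by $(2d_1+1)!! = (2(d_1-1)+3)!!$, one arrives at Equation \ref{allterms}, $L_{\vec{d}}(g) = \frac{1}{(2d_1+1)!!}24^g g! C(\vec{d})\,(T_1 + T_2 + T_3 + T_4)$, and it remains only to identify each of the four normalized terms with the corresponding sum of (lower-complexity) $L$-polynomials. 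So the proof is essentially the concatenation of Equations \ref{term1}, \ref{term2}, \ref{term3}, and \ref{term4}.

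First I would handle $T_1$: the key identity is the factorial bookkeeping $C(\vec{d}(i)) = \frac{C(\vec{d})}{(2d_1+1)!!}\cdot\frac{(2d_i + 2d_1 - 1)!!}{(2d_i+1)!!}$ for the vector $\vec{d}(i) = (d_2,\dots,d_i+d_1-1,\dots,d_n)$ obtained by absorbing $d_1$ into the $i$-th slot; multiplying through by $(2d_i+1)$ converts the Virasoro coefficient $\frac{(2d_i + 2(d_1-1)+1)!!}{(2d_i-1)!!}$ into $(2d_i+1)C(\vec{d}(i))\cdot\frac{(2d_1+1)!!}{C(\vec{d})}$, which gives Equation \ref{term1}. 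For $T_2$ I would do the same for the "absorb $d_1$ into the last insertion" term, writing the ratio of double factorials as the finite product $\prod_{k=1}^{d_1}(6g - 4 + 2n - 2|\vec{d}| + (2k-1))$ and recognizing the leftover as $L_{\vec{d}\setminus\{d_1\}}(g)$, yielding Equation \ref{term2}. For $T_3$, the $(g-1)$ term, the point is that $24^g g! = 24g\cdot 24^{g-1}(g-1)!$, and $d_{n+1} = 3g-2+n-|\vec{d}|$ rewrites as $3(g-1)-2+(n+1)-(|\vec{d}|-d_1+a+b)$, so each summand becomes $12g\,L_{\vec{d}\setminus\{d_1\}\cup\{a,b\}}(g-1)$ after using $(2a+1)!!(2b+1)!!\,C(\vec{d}\setminus\{d_1\}) = C(\vec{d}\setminus\{d_1\}\cup\{a,b\})$ and folding the $\tfrac12$ into the $24g = 2\cdot 12g$ — Equation \ref{term3}.

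The main obstacle — and the only genuinely delicate point — is $T_4$, the disconnected (separating) term. Here I would carry out the two reductions already sketched in the excerpt: (i) use the $I \leftrightarrow J$, $a \leftrightarrow b$ symmetry of the double sum to remove the factor $\tfrac12$, at the cost of fixing which factor receives the last marked point $n+1$; and (ii) observe that once $(a,b)$ and the partition $I \amalg J = \{2,\dots,n\}$ are fixed, the genus split $g_1 + g_2 = g$ making both intersection numbers nonzero is \emph{unique and determined}, since dimension matching on the first factor forces $3g_1 - 3 + (|I|+1) = a + |\vec{d}_I|$, i.e. $g_1 = \frac13(a + |\vec{d}_I| + 2 - |I|)$ (and this must be a nonnegative integer, else the term vanishes — the content of the Remark). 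The multiplicativity inputs are $C(\vec{d})/(2d_1+1)!! = C(\vec{d}_I)C(\vec{d}_J)$, $24^g = 24^{g_1}24^{g_2}$, and the binomial split $g! = g_1!\,g_2!\binom{g}{g_1}$; assembling these turns each summand into $(2a+1)!!\,L_{\vec{d}_I}(g_1)\,L_{\vec{d}_J\cup\{b\}}(g-g_1)\binom{g}{g_1}$, which is Equation \ref{term4}. I would be careful to state explicitly the convention that $L_{\vec{d}_I}(g_1) := 0$ whenever $g_1 = \frac13(a+|\vec{d}_I|+2-|I|)$ fails to be a nonnegative integer, so that the sum over $(a,b,I,J)$ in the statement is literally well-defined. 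Summing the four identities \ref{term1}--\ref{term4} with \ref{allterms} gives the claimed recursion; there is no induction here, only a reorganization of Theorem \ref{virasoro}.
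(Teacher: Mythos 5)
Your proposal is correct and follows essentially the same route as the paper: the paper's proof of Proposition \ref{virasoroforL} is literally the concatenation of Equations \ref{allterms}, \ref{term1}, \ref{term2}, \ref{term3}, and \ref{term4}, derived exactly as you describe (double-factorial bookkeeping for $T_1$ and $T_2$, the factorization $24^g g! = 24g\cdot 24^{g-1}(g-1)!$ for $T_3$, and the symmetry plus forced genus split for $T_4$). Your explicit emphasis on the vanishing convention for $L_{\vec{d}_I}(g_1)$ when $g_1$ is not a nonnegative integer matches the Remark in the paper.
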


\begin{proof}
This is the combination of Equations \ref{allterms}, \ref{term1}, \ref{term2}, \ref{term3}, \ref{term4}.
\end{proof}

\noindent We have the following direct corollary of Proposition \ref{virasoroforL}:

\begin{Corollary}\label{integervalued}
The intersection number $L_{\vec{d}}(g)$ is an integer-valued polynomial in $g$ of degree $|\vec{d}|$ with leading coefficient $6^{|\vec{d}|}$.
\end{Corollary}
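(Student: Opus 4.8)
The plan is to prove Corollary \ref{integervalued} by strong induction on the pair $(|\vec{d}|, n)$ ordered lexicographically (or, more conveniently, induction on $|\vec{d}|$ with a secondary induction on $n$), using the three recursions established above: Lemma \ref{stringanddilaton} (String and Dilaton) and Proposition \ref{virasoroforL} (Virasoro). The base case is $\vec{d} = \varnothing$, where we already computed $L_\varnothing(g) = 1$, which is an integer-valued polynomial of degree $0 = |\varnothing|$ with leading coefficient $1 = 6^0$. There is also the genus-zero type base information and Lemma \ref{onepointed} to anchor small cases, but the cleanest organizing principle is that every $L_{\vec{d}}(g)$ with $\vec{d} \neq \varnothing$ is expressed by one of the recursions in terms of $L_{\vec{\ell}}$ with $|\vec{\ell}| < |\vec{d}|$, or — in the case where every $d_i \le 1$ — in terms of $L_{\vec{d}'}$ with the same or smaller $|\vec{d}|$ but fewer entries, via String/Dilaton applied in reverse (peeling off a $0$ or a $1$).

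Concretely, I would argue as follows. If some $d_i \ge 2$, relabel so that $d_1 \ge 2$ and apply Proposition \ref{virasoroforL}. Each term on the right-hand side involves vectors $\vec{d}(i)$, $\vec{d}\setminus\{d_1\}$, $\vec{d}\setminus\{d_1\}\cup\{a,b\}$, $\vec{d}_I$, $\vec{d}_J\cup\{b\}$, all of which have strictly smaller $|\cdot|$ than $|\vec{d}|$ (this is immediate: $\vec{d}(i)$ replaces $d_i$ and $d_1$ by $d_i + d_1 - 1$, dropping $1$ from the total; $\vec{d}\setminus\{d_1\}$ drops $d_1 \ge 2$; in $T_3$ we have $a+b = d_1 - 2 < d_1$; in $T_4$ the total $a + |\vec{d}_I| + |\vec{d}_J| + b = d_1 - 2 + (|\vec{d}| - d_1) < |\vec{d}|$). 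By the inductive hypothesis, each such $L_{\vec{\ell}}$ is an integer-valued polynomial of degree $|\vec{\ell}|$ with leading coefficient $6^{|\vec{\ell}|}$. I then need to check that the right-hand side assembles into an integer-valued polynomial of degree $|\vec{d}|$ with leading coefficient $6^{|\vec{d}|}$: the products of integer-valued polynomials with integer polynomial coefficients (like $2d_i+1$, the product $\prod_{k=1}^{d_1}(6g - 4 + 2n - 2|\vec{d}| + 2k - 1)$, and $12g$, and the binomial coefficient $\binom{g}{g_1}$) remain integer-valued; and I track degrees and leading coefficients to verify that the unique term of top degree $|\vec{d}|$ comes from $T_2$, namely $\big(\prod_{k=1}^{d_1}(6g + \cdots)\big) L_{\vec{d}\setminus\{d_1\}}(g)$, whose leading coefficient is $6^{d_1} \cdot 6^{|\vec{d}| - d_1} = 6^{|\vec{d}|}$, while all other terms have strictly smaller degree (for $T_1$: $\deg = |\vec{d}(i)| = |\vec{d}| - 1$; for $T_3$ and $T_4$: similarly smaller by the total-degree drop plus the fact that $12g\cdot L(g-1)$ and $\binom{g}{g_1}L(g_1)L(g-g_1)$ have the expected degrees). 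For the remaining case where all $d_i \le 1$, write $\vec{d}$ as obtained from a smaller vector by adjoining $0$'s and $1$'s; Lemma \ref{stringanddilaton} gives $L_{\vec{d}\cup\{1\}}(g) = (6g - 3 + 3n)L_{\vec{d}}(g)$ and $L_{\vec{d}\cup\{0\}}(g) = \sum_i (2d_i+1) L_{\cdots}(g) + L_{\vec{d}}(g)$, so again integer-valuedness is preserved; adjoining a $1$ multiplies by a degree-one integer polynomial with leading coefficient $6$, raising the degree by $1$ and multiplying the leading coefficient by $6$ (consistent, since $|\vec{d}\cup\{1\}| = |\vec{d}|+1$); adjoining a $0$ keeps $|\vec{d}|$ and the leading coefficient unchanged (the String terms $L_{\vec{d}\setminus\{d_i\}\cup\{d_i-1\}}$ have strictly smaller total degree since $d_i - 1 < d_i$, so only the $+L_{\vec{d}}(g)$ term contributes at top degree), matching $|\vec{d}\cup\{0\}| = |\vec{d}|$.

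The main obstacle I anticipate is the bookkeeping in the $T_4$ term of Proposition \ref{virasoroforL}: one must confirm that for each fixed $(a,b,I,J)$ the genus $g_1 = g_1(I,a) = \frac{1}{3}(a + |\vec{d}_I| + 2 - |I|)$ is a \emph{constant} independent of $g$ (it is — this is exactly the point of the remark preceding the proposition), so that $L_{\vec{d}_I}(g_1)$ is a fixed integer and the $g$-dependence of that summand sits entirely in $\binom{g}{g_1} L_{\vec{d}_J\cup\{b\}}(g - g_1)$, which is a product of an integer-valued polynomial in $g$ (binomial coefficient) with an integer-valued polynomial in $g$ (shift of an integer-valued polynomial is integer-valued, as in Lemma \ref{shift}) — hence integer-valued — and whose degree is $g_1 + (|\vec{d}_J| + b) = g_1 + |\vec{d}| - d_1 - |\vec{d}_I| - a$; one checks this is $< |\vec{d}|$, i.e. that $g_1 < d_1 + |\vec{d}_I| + a$, which holds because $g_1 \le a + |\vec{d}_I| + 1 < a + |\vec{d}_I| + 2 \le a + |\vec{d}_I| + d_1$ as $d_1 \ge 2$. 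A secondary subtlety: a priori the recursions only tell us $L_{\vec{d}}$ agrees with an integer-valued polynomial on positive integers $g$, which is enough — any function $\mathbb{Z}_{\ge 1} \to \mathbb{Z}$ that agrees with a rational polynomial of degree $|\vec{d}|$ there \emph{is} an integer-valued polynomial in the sense defined. Once all of this is in place, Corollary \ref{integervalued} follows, and it feeds directly into the next stage of the proof (the $f^*$-vector nonnegativity).
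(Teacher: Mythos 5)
Your proposal is correct and follows essentially the same route as the paper: induction on $|\vec{d}|$, reduction to the case $d_i \ge 2$ via the String and Dilaton equations, and a term-by-term degree and leading-coefficient analysis of the Virasoro recursion for $L_{\vec{d}}(g)$, with the unique top-degree contribution coming from $T_2$. If anything you are slightly more careful than the paper in two spots: you make explicit the secondary induction on $n$ needed for the ``WLOG $d_i \ge 2$'' reduction (since adjoining a $0$ preserves $|\vec{d}|$), and you correctly attribute the leading coefficient $6^{d_1}\cdot 6^{|\vec{d}|-d_1}$ to the second term, where the paper's text says ``fourth term'' but its displayed computation plainly refers to the second.
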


\begin{proof}
We first establish a few base cases. By the Dilaton Equation, we have

\begin{align*}
L_{(1)}(g) & = 24^gg!C((1))\left<\tau_1\tau_{3g - 2 + 1 - 1}\right>_g \\
& = 24^gg!(3!!)\left<\tau_1\tau_{3g - 2}\right>_g \\
& = (2g - 2 + 1)24^gg!(3!!)\left<\tau_{3g - 2}\right>_g \\
& = 6g - 3
\end{align*}

\begin{align*}
L_{(1, 1)}(g) & = (6g - 3 + 3)L_{(1)}(g) \\
& = (6g)(6g - 3) 
\end{align*}

\noindent Using Proposition \ref{virasoroforL}, we have

\begin{align*}
L_{(2)}(g) & = (6g - 4 + 2 - 4 + 1)(6g - 4 + 2 - 4 + 3) + 12g \\
& = (6g - 5)(6g - 3) + 12g \\
& = 36g^2 - 48g + 15 + 12g \\
& = 36g^2 - 36g + 15
\end{align*}

\noindent Thus, if $|\vec{d}| \leq 2$, the desired result is satisfied. \\

\noindent By way of induction, suppose there exists an integer $d > 1$ such that $L_{\vec{\ell}}(g)$ is an integer valued polynomial of degree $|\vec{\ell}|$ with leading coefficient $6^{|\vec{\ell}|}$ whenever $|\vec{\ell}| < d$. Let $\vec{d} \in \mathbb{Z}^n_{\geq 0}$ be an integer vector such that $|\vec{d}| = d$. Without loss of generality, by Lemma \ref{stringanddilaton}, assume $d_i \geq 2$ for all $i$. By Proposition \ref{virasoroforL}, $L_{\vec{d}}(g)$ is a sum of four terms. What we need to check is that all four terms sum to an integer-valued polynomial of degree $d$. The first term is

\begin{equation*}
\sum_{i = 2}^n(2d_i + 1)L_{\vec{d}(i)}(g)
\end{equation*}

\noindent Since $|\vec{d}(i)| = |\vec{d}| - d_1 - d_i + (d_i + d_1 - 1) = d - 1 < d$, by the induction hypothesis, this term is an integer-valued polynomial in $g$ of degree $|\vec{d}(i)| = |\vec{d}| - 1$ \\

\noindent The second term is 

\begin{equation*}
\left(\prod_{k = 1}^{d_1}(6g - 4 + 2n - 2|\vec{d}| + (2k - 1))\right)L_{\vec{d}\setminus\{d_1\}}(g)
\end{equation*}

\noindent By the induction hypothesis, this is an integer-valued polynomial in $g$ of degree $|\vec{d}\setminus\{d_1\}| + d_1 = d$. \\

\noindent The third term is

\begin{equation*}
12g\sum_{a + b = d_1 - 2}L_{\vec{d}\setminus\{d_1\}\cup\{a, b\}}(g - 1)
\end{equation*}

\noindent By the induction hypothesis, this is an integer-valued polynomial in $g$ of degree $1 + |\vec{d}\setminus\{d_1\}\cup\{a, b\}| = 1 + d - d_1 + d_1 - 2 = d - 1 < d$ \\

\noindent Finally, the fourth term is 

\begin{equation*}
\sum_{\substack{a + b = d_1 - 2 \\ I \amalg J = \{2, \ldots, n\}}} (2a + 1)!!L_{\vec{d}_I}(g_1)L_{\vec{d}_J\cup\{b\}}(g - g_1){g \choose g_1}
\end{equation*}

\noindent The binomial term in the sum, ${g \choose g_1}$, is an integer-valued polynomial in $g$ of degree $g_1 = \frac{1}{3}(a + |\vec{d}_I| + 2 - |I|)$. Therefore, by the induction hypothesis, each summand is an integer-valued polynomial in $g$ of degree

\begin{align*}
g_1 + |\vec{d}_J| + b & = \frac{1}{3}(a + |\vec{d}_I| + 2 - |I|) + |\vec{d}_J| + b \\
& = \left(\frac{a}{3} + b\right) + \left(\frac{|\vec{d}_I|}{3} + |\vec{d}_J|\right) + \left(\frac{2}{3} - \frac{|I|}{3}\right) \\
& \leq \left(a + b\right) + \left(|\vec{d}_I| + |\vec{d}_J|\right) + \frac{2}{3} \\
& = d_1 - 2 + |\vec{d}| - d_1 + \frac{2}{3} \\
& = |\vec{d}| - \frac{4}{3} \\
& < d
\end{align*} 

\noindent Thus, it follows that $L_{\vec{d}}(g)$ is an integer-valued polynomial of degree $|\vec{d}| = d$. Its leading coefficient comes from the contribution of the fourth term, which is

\begin{equation*}
6^{d_1}6^{|\vec{d}\setminus \{d_1\}|} = 6^{|\vec{d}|}
\end{equation*}

\noindent as desired. 
\end{proof}

\subsection{The $f^*$-Vector of $L_{\vec{d}}(g + m(\vec{d}))$ Is Nonnegative}

\noindent Now we need a second corollary that says the $f^*$-vector of $L_{\vec{d}}(g + m)$ is nonnegative. However, before we prove that corollary, we need the following lemma:

\begin{Lemma}\label{secondterm}
Let $\vec{d} = (d_1, \ldots, d_n) \in \mathbb{Z}^n_{\geq 2}$ be an integer vector, and define $m = m(\vec{d}) := \left\lceil \frac{2 - n + |\vec{d}|}{3} \right\rceil - 1$. Then the polynomial

\begin{equation*}
\prod_{k = 1}^{d_1}(6(g + m) - 4 + 2n - 2|\vec{d}| + (2k - 1))
\end{equation*}

\noindent has nonnegative $f^*$-vector.

\end{Lemma}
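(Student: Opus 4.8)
The plan is to show that the polynomial in question, which is a product of $d_1$ linear factors in $g$, has nonnegative $f^*$-vector by reducing to a clean structural statement about products of linear polynomials with nonnegative integer roots (in the shifted variable). First I would compute the argument explicitly at $g=1$: setting $g=1$ in $6(g+m)-4+2n-2|\vec{d}|+(2k-1)$ gives $6m+2+2n-2|\vec{d}|+(2k-1)$, and I would use the definition $m = \lceil (2-n+|\vec{d}|)/3 \rceil - 1$ to pin down the sign. The key arithmetic fact is that $6m+2n-2|\vec{d}|+2 \geq 0$ precisely because $m+1 = \lceil (2-n+|\vec{d}|)/3\rceil \geq (2-n+|\vec{d}|)/3$, so $3(m+1) \geq 2-n+|\vec{d}|$, i.e. $3m+3+n-|\vec{d}| \geq 2$, i.e. $6m+2n-2|\vec{d}| \geq 4 - 6$; I would track this carefully to conclude that each of the $d_1$ linear factors, evaluated at $g=1$, is a nonnegative integer (in fact each factor is an odd shift of $6m-4+2n-2|\vec{d}|$, and the smallest, $k=1$, equals $6m-2+2n-2|\vec{d}|$). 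Write $c_k := 6m-4+2n-2|\vec{d}|+(2k-1) \in 6\mathbb{Z} + (\text{something})$ evaluated appropriately, so the factor is $6g + (c_k - 6)$, and the claim becomes: $c_k - 6 \geq -6$ suitably, i.e. the polynomial $6g + r$ with $r \geq 0$ an integer — more precisely each factor has the form $6(g-1) + s_k$ with $s_k$ a positive integer.

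Once each linear factor is written as $6(g-1) + s_k$ with $s_k \in \mathbb{Z}_{>0}$, I would invoke the following elementary lemma (which I would state and prove, as it is not in the excerpt): \emph{if $Q(g)$ has nonnegative $f^*$-vector and $a \in \mathbb{Z}_{>0}$, $b \in \mathbb{Z}_{\geq 0}$, then $(ag - a + b)\cdot$ is not quite multiplicative, so instead} — the cleaner route is to observe that a product $\prod_k (6(g-1)+s_k)$ with all $s_k$ positive integers is, up to the change of variables $h = g-1$, a polynomial in $h$ with nonnegative coefficients; and a polynomial $P(g) = \sum_j c_j (g-1)^j$ with $c_j \geq 0$ has nonnegative $f^*$-vector because $(g-1)^j = \sum_{k} S(j,k)\,k!\,\binom{g-1}{k}$ where $S(j,k)$ are the (nonnegative) Stirling numbers of the second kind, hence each $f^*_k = \sum_j c_j S(j,k) k! \geq 0$. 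So the heart of the argument is: (i) each linear factor has nonnegative coefficients when expanded about $g=1$, which is exactly the $g=1$ evaluation computation above together with the obvious positivity of the leading coefficient $6$; (ii) products of such linear factors inherit nonnegative coefficients about $g=1$; (iii) nonnegative coefficients about $g=1$ implies nonnegative $f^*$-vector via the Stirling expansion.

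I expect the main obstacle to be step (i), namely verifying with care that the constant term of each factor in the variable $g-1$ is a \emph{nonnegative} integer — this is where the ceiling function in the definition of $m$ must be handled, and a sign error there would sink the whole lemma. Concretely I need $6m - 4 + 2n - 2|\vec{d}| + (2k-1) + 6 \geq 0$ for $k \geq 1$, and it suffices to check $k=1$: $6m + 4 + 2n - 2|\vec{d}| \geq 0$, i.e. $3m + 2 + n - |\vec{d}| \geq 0$, which follows from $3(m+1) \geq 2 - n + |\vec{d}|$, i.e. $3m + 3 + n - |\vec{d}| \geq 2$, so $3m + 2 + n - |\vec{d}| \geq 1 > 0$. (Here I used $\lceil x \rceil \geq x$.) The other steps are routine: step (ii) is immediate since a product of polynomials with nonnegative coefficients in any fixed affine variable has nonnegative coefficients, and step (iii) is the standard fact relating the monomial expansion about $1$ to the binomial basis $\binom{g-1}{k}$ via nonnegative Stirling numbers. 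Finally, since $\vec{d} \in \mathbb{Z}^n_{\geq 2}$ forces $d_1 \geq 2 \geq 1$, the product is nonempty and of the correct degree $d_1$, so integrality of the $f^*$-vector follows from Corollary \ref{integervalued} applied to the relevant $L_{\vec{\ell}}$, or simply from the fact that the polynomial is integer-valued (it takes integer values at all integers $\geq 1$, indeed at all integers).
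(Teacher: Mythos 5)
Your proof is correct, and the first half coincides with the paper's: both arguments reduce to checking that each linear factor $L_k(g) = 6(g+m) - 4 + 2n - 2|\vec{d}| + (2k-1)$ has nonnegative value at $g = 1$ (equivalently, nonnegative $f_0^*$), the paper by a case analysis on $2 - n + |\vec{d}| \bmod 3$ (getting $f_0^* = 2k-1$, $2k+3$, or $2k+1$), you by the single inequality $\lceil x \rceil \geq x$ — note you have a small off-by-one in the displayed constant ($6m+4+2n-2|\vec{d}|$ should be $6m+3+2n-2|\vec{d}|$), but your derived bound $3m+3+n-|\vec{d}| \geq 2$ still gives $L_1(1) \geq 1 > 0$, so nothing breaks. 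Where you genuinely diverge is in how you pass from the factors to the product. The paper invokes its Lemma \ref{product}: each linear factor, having nonnegative integral $f^*$-vector, is by Breuer's theorem the Ehrhart polynomial of a partial polytopal complex, and the Cartesian product of those complexes realizes the product polynomial, so its $f^*$-vector is again nonnegative. You instead argue purely algebraically: each factor is $6(g-1) + s_k$ with $s_k > 0$, products of polynomials with nonnegative coefficients in the variable $h = g-1$ again have nonnegative coefficients in $h$, and $h^j = \sum_k S(j,k)\,k!\binom{h}{k}$ with nonnegative Stirling numbers converts this into nonnegativity of the $f^*$-vector. This is a valid, self-contained, and more elementary route that avoids Breuer's theorem entirely at this step. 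The one caveat worth recording is that your intermediate condition (nonnegative coefficients about $g = 1$) is strictly stronger than having a nonnegative $f^*$-vector — for instance $\binom{g-1}{2}$ has a negative coefficient in the $(g-1)$-monomial basis — so your method works here precisely because the factors are linear (where the two conditions coincide); it would not substitute for Lemma \ref{product} in the later parts of the paper where general polynomials with nonnegative $f^*$-vectors must be multiplied. Your closing remark on integrality is superfluous for the lemma as stated (only nonnegativity is claimed), and is anyway immediate since the product has integer coefficients.
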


\begin{proof}
For $1\leq k \leq d_1$, consider the linear polynomial

\begin{equation*}
L_k(g) := 6(g + m) - 4 + 2n - 2|\vec{d}| + (2k - 1)
\end{equation*}

\noindent Suppose we compute the $f^*$-vector of this linear polynomial, i.e. we find integers $f_0^*$ and $f_1^*$ such that

\begin{equation*}
L_k(g) = f_0^*{g - 1 \choose 0} + f_1^*{g - 1 \choose 1}
\end{equation*}

\noindent This means $f_0^* = L_k(1)$ and $f_1^* = \text{leading coefficient of $L_k(g)$} = 6$. So all we need to check is whether $f_0^* = L_k(1)$ is always nonnegative. \\

\noindent In the case that $2 - n + |\vec{d}| \equiv 0 \ \text{mod} \ 3 \implies \left\lceil \frac{2 - n + |\vec{d}|}{3} \right\rceil = \frac{2 - n + |\vec{d}|}{3}$, we have

\begin{align*}
f_0^* & = L_k(1) \\
& = 6\left(1 + \frac{2 - n + |\vec{d}|}{3} - 1\right) - 4 + 2n - 2|\vec{d}| + (2k - 1) \\
& = 2k - 1
\end{align*}

\noindent Similar calculations show that

\begin{align*}
& 2 - n + |\vec{d}| \equiv 1 \ \text{mod} \ 3 \implies \left\lceil \frac{2 - n + |\vec{d}|}{3} \right\rceil = \frac{2 - n + |\vec{d}|}{3} + \frac{2}{3} \implies f_0^* = L_k(1) = 2k + 3 \\
& 2 - n + |\vec{d}| \equiv 2 \ \text{mod} \ 3 \implies \left\lceil \frac{2 - n + |\vec{d}|}{3} \right\rceil = \frac{2 - n + |\vec{d}|}{3} + \frac{1}{3} \implies f_0^* = L_k(1) = 2k + 1
\end{align*}

\noindent Thus, the $f^*$-vector of $L_k(g)$ is nonnegative, and by Lemma \ref{product}, the product $\prod_{k = 1}^{d_1}L_k(g)$ also has nonnegative $f^*$-vector, as desired.

\end{proof}

\begin{Corollary}\label{fstarnonnegative}
For any vector $\vec{d} \in \mathbb{Z}_{\geq 0}^n$, define $m = m(\vec{d}) := \left\lceil \frac{2 - n + |\vec{d}|}{3} \right\rceil - 1$. Then the $f^*$-vector of $L_{\vec{d}}(g + m)$ is nonnegative.
\end{Corollary}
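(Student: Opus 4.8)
The plan is to mirror the structure of the proof of Corollary \ref{integervalued}: induct on $|\vec{d}|$, and show that nonnegativity of the $f^*$-vector is preserved under each of the recursive operations that compute $L_{\vec{d}}(g+m)$, namely the String/Dilaton reductions of Lemma \ref{stringanddilaton} and the four-term Virasoro recursion of Proposition \ref{virasoroforL}. First I would dispose of the base cases $|\vec{d}| \leq 2$ by direct computation (using the explicit polynomials $L_{(1)}(g) = 6g-3$, $L_{(1,1)}(g) = 6g(6g-3)$, $L_{(2)}(g) = 36g^2 - 36g + 15$ from Corollary \ref{integervalued}, shifted by the appropriate $m$ and expanded in the basis $\{\binom{g-1}{k}\}$), and then set up the induction hypothesis: whenever $|\vec{\ell}| < d$, the polynomial $L_{\vec{\ell}}(g + m(\vec{\ell}))$ has nonnegative $f^*$-vector.

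The crucial subtlety — and the main obstacle — is that each term in the Virasoro recursion carries its \emph{own} natural genus shift $m(\vec{\ell})$ depending on the vector $\vec{\ell}$ appearing in that term, and these shifts need not coincide with $m(\vec{d})$. So the argument cannot be a naive "sum of things with nonnegative $f^*$-vectors is nonnegative." I would handle this by invoking Lemma \ref{shift}: if $L_{\vec{\ell}}(g + m(\vec{\ell}))$ has nonnegative $f^*$-vector, then so does $L_{\vec{\ell}}(g + m(\vec{\ell}) + k)$ for any $k \geq 0$, hence so does $L_{\vec{\ell}}(g + m(\vec{d}))$ provided $m(\vec{d}) \geq m(\vec{\ell})$. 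Thus the key combinatorial lemma to establish is a monotonicity statement: for every vector $\vec{\ell}$ that occurs in the right-hand side of Proposition \ref{virasoroforL} applied to $\vec{d}$ (with appropriately shifted genus argument), one has $m(\vec{d}) \geq m(\vec{\ell})$, or more precisely the genus argument appearing is at least $m(\vec{d})$ above the point where $L_{\vec{\ell}}$ starts being the Ehrhart polynomial of a complex. This should follow from tracking $|\vec{\ell}|$ and $n(\vec{\ell})$: e.g. for $\vec{d}(i)$ one has $|\vec{d}(i)| = |\vec{d}| - 1$ and $n$ decreases by one, so $2 - n(\vec{d}(i)) + |\vec{d}(i)| = 2-(n-1)+(|\vec{d}|-1) = 2 - n + |\vec{d}|$, giving $m(\vec{d}(i)) = m(\vec{d})$; the second term is covered by Lemma \ref{secondterm} combined with Lemma \ref{product}; and similar bookkeeping handles the genus-lowering term (with its factor $12g$, which itself has $f^*$-vector $(0,12)$, nonnegative) and the splitting term.

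For the splitting term $\sum (2a+1)!! \, L_{\vec{d}_I}(g_1) \, L_{\vec{d}_J \cup \{b\}}(g - g_1)\binom{g}{g_1}$, the argument is more delicate because the genus variable is being split, $g = g_1 + (g - g_1)$, and $g_1$ is a fixed affine function of $g$. Here I would argue that $L_{\vec{d}_I}(g_1)$, viewed as a polynomial in $g$, equals $L_{\vec{d}_I}$ evaluated at an affine reparametrization of $g$; since $g_1 = \tfrac{1}{3}(a + |\vec{d}_I| + 2 - |I|)$ is constant in $g$ in this context — wait, more carefully: in Proposition \ref{virasoroforL} the splitting has one factor with the "extra" insertion absorbing the genus dependence, so exactly one of the two factors is genuinely a nonconstant polynomial in $g$ while the other is a constant, and $\binom{g}{g_1}$ with constant $g_1$ is nonnegative-$f^*$. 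I would then need: (constant nonnegative integer) $\times$ (nonnegative-$f^*$ poly after suitable shift) $\times$ (nonnegative-$f^*$ poly $\binom{g}{g_1}$), combined via Lemma \ref{product} and Lemma \ref{shift}, yields a nonnegative-$f^*$ summand; summing over the (finitely many) choices of $(a,b)$ and $(I,J)$ preserves this. The one genuine risk is that the required inequality $m(\vec{d}) \geq$ (the shift needed for the surviving $L$-factor) could fail for some degenerate partition; I would check this case-by-case using the residue of $2 - n + |\vec{d}|$ mod $3$, exactly as in Lemma \ref{secondterm}, and I expect it to go through because $\vec{d}$ being the "largest" vector forces its shift to dominate.

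Once the induction closes, the corollary follows, and the final sentence of the paper's argument — applying Breuer's Theorem \ref{Breuer} to $L_{\vec{d}}(g + m)$, which is integer-valued (Corollary \ref{integervalued}) with nonnegative $f^*$-vector — produces the partial polytopal complex $P_{\vec{d}}$; its dimension $|\vec{d}|$ and volume $6^{|\vec{d}|}$ are read off from the degree and leading coefficient via Lemma \ref{volume}.
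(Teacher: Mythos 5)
Your proposal follows essentially the same route as the paper's proof: induction on $|\vec{d}|$ with the same base cases, reduction to $d_i \geq 2$ via Lemma \ref{stringanddilaton}, and a term-by-term analysis of Proposition \ref{virasoroforL} in which the genus shifts are reconciled via Lemma \ref{shift} and Lemma \ref{product} (with $m(\vec{d}(i)) = m(\vec{d})$ for the first term, Lemma \ref{secondterm} for the second, and the mod-$3$ residue bookkeeping yielding the exact identity $m(\vec{d}) - g_1 = m(\vec{d}_J\cup\{b\})$ for the splitting term, exactly as the paper carries out). The only slip is cosmetic: the $f^*$-vector of $12g$ is $(12, 12)$, not $(0, 12)$, which does not affect nonnegativity.
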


\begin{proof}
For the base cases, we have

\begin{align*}
& m((0)) = 0, \  L_{(0)}(g + 0) = 1 = 1{g - 1 \choose 0} \\
& m((1)) = 0, \ L_{(1)}(g + 0) = 6g - 3 = 3{g - 1 \choose 0} + 6{g - 1 \choose 1} \\
& m((1, 1)) = 0, \ L_{(1, 1)}(g + 0) =  18{g - 1 \choose 0} + 90{g - 1 \choose 1} + 72{g - 1 \choose 2} \\
& m((2)) = 0, \ L_{(2)} = 15{g - 1 \choose 0} + 72{g - 1 \choose 1} + 72{g - 1 \choose 2}
\end{align*}

\noindent By way of induction, suppose that $L_{\vec{\ell}}(g + m(\vec{\ell}))$ has a nonnegative $f^*$-vector for all vectors $\vec{\ell}$ where $|\vec{\ell}| < d$ for some positive integer $d > 1$. Let $\vec{d}$ be an integer vector such that $|\vec{d}| = d$. By Lemma \ref{stringanddilaton}, without loss of generality, assume $d_i \geq 2$ for all $i$. This means we can use Proposition \ref{virasoroforL} to compute $L_{\vec{d}}(g + m(\vec{d}))$. All that remains is to check that all four terms that arise in Proposition \ref{virasoroforL} add up to a polynomial with a nonnegative $f^*$vector. \\

\noindent When we use Proposition \ref{virasoroforL} to compute $L_{\vec{d}}(g + m(\vec{d}))$, the contribution coming from the first term is

\begin{equation*}
\sum_{i = 2}^n(2d_i + 1)L_{\vec{d}(i)}(g + m(\vec{d}))
\end{equation*}

\noindent Since $|\vec{d}(i)| = |\vec{d}| - 1$, by the induction hypothesis, the integer-valued polynomial

\begin{equation*}
L_{\vec{d}(i)}(g + m(\vec{d}(i)))
\end{equation*}

\noindent has a non-negative $f^*$-vector. However, notice that

\begin{align*}
m(\vec{d}(i)) & = \left\lceil \frac{2 - (n - 1) + (|\vec{d}| - 1)}{3} \right\rceil - 1 \\
& = \left\lceil \frac{2 - n + |\vec{d}|}{3} \right\rceil - 1 \\
& = m(\vec{d})
\end{align*}

\noindent Therefore, $L_{\vec{d}(i)}(g + m(\vec{d})) = L_{\vec{d}(i)}(g + m(\vec{d}(i)))$, so the contribution coming from the first term has nonnegative $f^*$-vector. \\

\noindent Now consider the contribution coming from the second term,

\begin{equation*}
\left(\prod_{k = 1}^{d_1}(6(g + m(\vec{d}) - 4 + 2n - 2|\vec{d}| + (2k - 1)\right)L_{\vec{d}\setminus\{d_1\}}(g + m(\vec{d}))
\end{equation*}

\noindent By Lemma \ref{secondterm}, the product of linear terms is an integer-valued polynomial with non-negative $f^*$-vector. Furthermore, since

\begin{align*}
m(\vec{d}\setminus\{d_1\}) & = \left\lceil \frac{2 - (n - 1) + (|\vec{d}| - d_1)}{3} \right\rceil - 1 \\
& = \left\lceil \frac{2 - n + |\vec{d}| - (d_1 - 1)}{3} \right\rceil - 1 \\
& \leq \left\lceil \frac{2 - n + |\vec{d}|}{3} \right\rceil - 1 \\
& = m(\vec{d})
\end{align*}

\noindent then by the induction hypothesis and Lemma \ref{shift}, it follows that $L_{\vec{d}\setminus\{d_1\}}(g + m(\vec{d}))$ is an integer-valued polynomial with nonnegative $f^*$-vector. Therefore, by Lemma \ref{product}, the contribution of the second term is an integer-valued polynomial with nonnegative $f^*$-vector. \\

\noindent Now consider the contribution coming from the third term,

\begin{equation*}
12(g + m(\vec{d}))\sum_{a + b = d_1 - 2}L_{\vec{d}\setminus\{d_1\}\cup\{a, b\}}(g + m(\vec{d}) - 1)
\end{equation*}

\noindent Since $|\vec{d}\setminus\{d_1\}\cup\{a, b\}| = |\vec{d}| - d_1 + (d_1 - 2) = |\vec{d}| - 2$, by the induction hypothesis,

\begin{equation*}
L_{\vec{d}\setminus\{d_1\}\cup\{a, b\}}(g + m(\vec{d}\setminus\{d_1\}\cup\{a, b\}))
\end{equation*}

\noindent is an integer-valued polynomial with nonnegative $f^*$-vector. However, notice that

\begin{align*}
m(\vec{d}\setminus\{d_1\}\cup\{a, b\}) & = \left\lceil \frac{2 - (n + 1) + |\vec{d}| - 2}{3} \right\rceil - 1 \\
& = \left\lceil \frac{2 - n + |\vec{d}| - 3}{3} \right\rceil - 1 \\
& < \left\lceil \frac{2 - n + |\vec{d}|}{3} \right\rceil - 1 \\
& = m(\vec{d})
\end{align*}

\noindent Therefore, by Lemma \ref{shift}, $L_{\vec{d}\setminus\{d_1\}\cup\{a, b\}}(g + m(\vec{d}) - 1)$ has a nonnegative $f^*$-vector. Since the $f^*$-vector of the polynomial $12g$ is $(12, 12)$, using Lemma \ref{shift} again, we see that $12(g + m(\vec{d}))$ has a nonnegative $f^*$-vector. By Lemma \ref{product}, the total contribution coming from the third term is an integer-valued polynomial with nonnegative $f^*$-vector. \\

\noindent Finally, consider the contribution coming from the fourth term,

\begin{equation*}
\sum_{\substack{a + b = d_1 - 2 \\ I \amalg J = \{2, \ldots, n\}}}(2a + 1)!! L_{\vec{d}_I}(g_1)L_{\vec{d}_J\cup\{b\}}(g + m(\vec{d}) - g_1){g + m(\vec{d}) \choose g_1}
\end{equation*}

\noindent Recall that

\begin{equation*}
L_{\vec{d}_I}(g_1) = \begin{cases}
L_{\vec{d}_I}\left(\frac{1}{3}(a + |\vec{d}_I| + 2 - |I|)\right) \quad a + |\vec{d}_I| + 2 - |I| \equiv 0 \ \text{mod} \ 3 \\
0 \quad \text{otherwise}
\end{cases}
\end{equation*}

\noindent so we can assume that $a + |\vec{d}_I| + 2 - |I| \equiv 0 \ \text{mod} \ 3$. Now, notice that, for any pairs $(a, b)$ and $(I, J)$ such that $a + b = d_1 - 2$, $I \amalg J = \{2, \ldots, n\}$, we have

\begin{align*}
(2 - n + |\vec{d}|) & - (a + |\vec{d}_I| + 2 - |I|) - (2 - (|J| + 1) + |\vec{d}_J| + b) \\
 & = -1 + (|I| + |J| - n) + (|\vec{d}| - |\vec{d}_I| - |\vec{d}_J|) - (a + b) \\
 & = -1 - 1 + d_1 - (d_1 - 2) \\
 & = 0
\end{align*}

\noindent But since $a + |\vec{d}_I| + 2 - |I| \equiv 0 \ \text{mod} \ 3$, it follows that

\begin{equation*}
2 - n + |\vec{d}| \equiv 2 - (|J| + 1) + |\vec{d}_J| + b \ \text{mod} \ 3
\end{equation*}

\noindent In particular, there exists an integer $0 \leq k \leq 2$ such that

\begin{align*}
\left\lceil \frac{2 - n + |\vec{d}|}{3} \right\rceil & = \frac{2 - n + |\vec{d}|}{3} + \frac{k}{3} \\
\left\lceil \frac{2 - (|J| + 1) + |\vec{d}_J| + b}{3} \right\rceil & = \frac{2 - (|J| + 1) + |\vec{d}_J| + b}{3} + \frac{k}{3}
\end{align*}

\noindent and thus,

\begin{align*}
& (m(\vec{d}) - g_1) - m(\vec{d}_J\cup\{b\}) \\
& = \left(\left\lceil \frac{2 - n + |\vec{d}|}{3} \right\rceil - 1 - \frac{1}{3}(a + |\vec{d}_I| + 2 - |I|)\right) - \left(\left\lceil \frac{2 - (|J| + 1) + |\vec{d}_J| + b}{3} \right\rceil - 1\right) \\
& = \frac{1}{3}\left((2 - n + |\vec{d}|)  - (a + |\vec{d}_I| + 2 - |I|) - (2 - (|J| + 1) + |\vec{d}_J| + b) + k - k\right) \\
& = 0
\end{align*}

\noindent It follows that $m(\vec{d}) - g_1 = m(\vec{d}_J\cup\{b\})$, so 

\begin{equation*}
L_{\vec{d}_J\cup\{b\}}(g + m(\vec{d}) - g_1) = L_{\vec{d}_J\cup\{b\}}(g + m(\vec{d}_J\cup\{b\}))
\end{equation*}

\noindent Therefore, since $|\vec{d}_J\cup\{b\}| = |\vec{d}_J| + b \leq |\vec{d}| - d_1 + d_1 - 2 = |\vec{d}| - 2 < |\vec{d}|$, by the induction hypothesis, $L_{\vec{d}_J \cup \{b\}}(g + m(\vec{d}) - g_1)$ is an integer-valued polynomial with nonnegative $f^*$-vector.  Furthermore, since the $f^*$-vector of ${g + m(\vec{d}) \choose g_1}$ is nonnegative (by Lemma \ref{shift}), the contribution of the fourth term is a sum of products of polynomials with nonnegative $f^*$-vector. By Theorem \ref{product}, the total contribution of the fourth term is a polynomial with nonnegative $f^*$-vector.  \\

\noindent In summary, we have shown that $L_{\vec{d}}(g + m(\vec{d}))$ is a sum of integer-valued polynomials with nonnegative $f^*$-vector, and therefore $L_{\vec{d}}(g + m(\vec{d})$ has nonnegative $f^*$-vector, as desired.

\end{proof}

\subsection{Putting the Pieces Together}

\begin{proof}[Proof of Theorem \ref{MainTheorem}]
By Corollary \ref{integervalued} and Corollary \ref{fstarnonnegative}, we know that

\begin{equation*}
L_{\vec{d}}(g + m) = 24^{g + m}(g + m)!C(\vec{d})\left<\tau_{\vec{d}}\tau_{3(g + m) - 2 + n - d}\right>_{g + m}
\end{equation*}

\noindent is an integer-valued polynomial in $g$ of degree $|\vec{d}|$ with nonnegative $f^*$-vector. By Breuer's theorem (Theorem \ref{Breuer}), there exists a partial polytopal complex $P_{\vec{d}}$ of dimension $|\vec{d}|$ such that

\begin{equation*}
L_{\vec{d}}(g + m) = \#\{\text{integer lattice points in $gP_{\vec{d}}$}\}
\end{equation*}

\noindent By Theorem \ref{volume}, the volume of $P_{\vec{d}}$ is $6^{|\vec{d}|}$.

\end{proof}

\section{Examples}\label{Examples}

In this section, we compute some examples of the Ehrhart polynomials $L_{\vec{d}}(g + m)$, along with their corresponding partial polytopal complexes. When $|\vec{d}| \leq 2$, we show that the corresponding partial polytopal complexes can be presented as \emph{inside-out polytopes}, a type of partial polytopal complex first studied by Beck and Zaslavsky \cite{beck2006inside}. \\

\begin{Definition}
Let $P \subseteq \mathbb{R}^d$ be a full dimensional integral $d$-polytope, and let $\mathcal{H}$ be a \emph{hyperplane arrangement}, that is, a finite collection of hyperplanes in $\mathbb{R}^d$. An \emph{inside-out polytope} is any set of the form

\begin{equation*}
P \setminus \left(\bigcup_{H \in \mathcal{H}} H\right)
\end{equation*}
\end{Definition}

\noindent The reasoning behind the name is that, one should think of the hyperplanes as dissecting the polytope $P$ into various regions, and the various hyperplanes serve as `boundaries' that lie on the `inside' of the polytope. \\

\subsection{$|\vec{d}| = 1$}

\noindent We've already computed the Ehrhart polynomial

\begin{equation*}
L_{(1)}(g + m) = L_1(g) = 6g - 3 = 3{g - 1 \choose 0} + 6{g - 1 \choose 1} = 3\left[{g - 1 \choose 0} + 2{g - 1 \choose 1}\right]
\end{equation*}

\noindent Define $P_{(1)}$ as the inside-out polytope

\begin{equation*}
P_{(1)} = [-3, 3]\setminus\{\pm 2, \pm 3\}
\end{equation*}

\noindent which can be visualized as

\begin{center}
\begin{tikzpicture}

\draw (-4, 0) node {$P_{(1)} = $};

\draw (-3, -0.5) node {$-3$};
\draw (-2, -0.5) node {$-2$};
\draw (-1, -0.5) node {$-1$};
\draw (0, -0.5) node {$0$};
\draw (1, -0.5) node {$1$};
\draw (2, -0.5) node {$2$};
\draw (3, -0.5) node {$3$};

\draw (-1, 0) -- (1, 0);

\filldraw (-1, 0) circle (2pt);
\filldraw (0, 0) circle (2pt);
\filldraw (1, 0) circle (2pt);

\draw (-2, 0) circle (2pt);
\draw (-3, 0) circle (2pt);
\draw (2, 0) circle (2pt);
\draw (3, 0) circle (2pt);

\draw (-1.9, 0) -- (-1.1, 0);
\draw (-2.9, 0) -- (-2.1, 0);
\draw (1.1, 0) -- (1.9, 0);
\draw (2.1, 0) -- (2.9, 0);

\end{tikzpicture}
\end{center}

\noindent The inside-out polytope $P_{(1)}$ has a unimodular triangulation given by

\begin{equation*}
\mathcal{T} = \{0\} \amalg \{-1\} \amalg \{1\} \amalg (-3, -2) \amalg (-2, -1) \amalg (-1, 0) \amalg (0, 1) \amalg (1, 2) \amalg (2, 3)
\end{equation*}

\noindent The $f^*$-vector of this triangulation is $f^* = (3, 6)$, and by Theorem \ref{unimodular}, 

\begin{equation*}
L_{(1)}(g + m) = \left| gP_{(1)} \cap \mathbb{Z} \right|
\end{equation*}

\noindent Alternatively, we can consider the inside-out polytope 

\begin{equation*}
\widetilde{P_{(1)}} = [-1, 1]\setminus \{\pm 1\}
\end{equation*}

\noindent which can be visualized as

\begin{center}
\begin{tikzpicture}

\draw (-2, 0) node {$\widetilde{P_{(1)}} = $};

\draw (-0.9, 0) -- (0.9, 0);
\filldraw (0, 0) circle (2pt);
\draw (-1, 0) circle (2pt);
\draw (1, 0) circle (2pt);

\draw (-1, -0.5) node {$-1$};
\draw (0, -0.5) node {$0$};
\draw (1, -0.5) node {$1$};

\end{tikzpicture}
\end{center}

\noindent The unimodular triangulation given by $\mathcal{T} = \{0\} \amalg (-1, 0) \amalg (0, 1)$ has support $P_{(1)}$, and has $f^*$-vector $(1, 2)$. Therefore, by Theorem \ref{unimodular}

\begin{equation*}
L_{(1)}(g) = 3 \left| g\widetilde{P_{(1)}} \cap \mathbb{Z} \right|
\end{equation*}

\subsection{$|\vec{d}| = 2$}

\noindent There are two vectors to consider, $(1, 1)$ and $(2)$. We have computed the polynomial $L_{(1, 1)}(g + m) = L_{(1, 1)}(g)$ previously in the proof of Corollary \ref{fstarnonnegative},

\begin{align*}
L_{(1, 1)}(g + m) = L_{(1, 1)}(g) & = 18{g - 1 \choose 0} + 90{g - 1 \choose 1} + 72{g - 1 \choose 2} \\
& = 18\left[ {g - 1 \choose 0} + 5{g - 1 \choose 1} + 4{g - 1 \choose 2} \right]
\end{align*}

\noindent Consider the inside-out polytope $P_{(1, 1)} = \left([-3, 3] \times [-3, 3]\right)\setminus \mathcal{H}_{(1, 1)} \subset \mathbb{R}^2$, where $\mathcal{H}_{(1, 1)}$ is the hyperplane arrangement

\begin{equation*}
\mathcal{H}_{(1, 1)} := \{x_2 = 3, x_2 = 2, x_2 = 1, x_2 = 0, x_1 = 3\}
\end{equation*}

\noindent Here is a visualization of $P_{(1, 1)}$:

\begin{center}
\begin{tikzpicture}

\draw (-3, 2.9) -- (-3, 2.1);
\draw (-3, 1.9) -- (-3, 1.1);
\draw (-3, 0.9) -- (-3, 0.1);
\draw (-3, -0.1) -- (-3, -3) -- (3, -3);

\draw[dashed] (-3, 3) -- (3, 3) -- (3, -3);
\draw[dashed] (-3, 2) -- (3, 2);
\draw[dashed] (-3, 1) -- (3, 1);
\draw[dashed] (-3, 0) -- (3, 0);

\draw (3.7, 3) node {$(3, 3)$};
\draw (-3.7, 3) node {$(-3, 3)$};
\draw (3.7, -3) node {$(3, -3)$};
\draw (-3.7, -3) node {$(-3, -3)$};

\end{tikzpicture}
\end{center}

\noindent $P_{(1, 1)}$ admits a unimodular triangulation as suggested below:

\begin{center}
\begin{tikzpicture}

\draw[red] (-3, 2.9) -- (-3, 2.1);
\draw[red] (-3, 1.9) -- (-3, 1.1);
\draw[red] (-3, 0.9) -- (-3, 0.1);
\draw[red] (-3, -0.1) -- (-3, -3) -- (3, -3);
\draw[red] (-3, -2) -- (3, -2);
\draw[red] (-3, -1) -- (3, -1);

\draw[red] (-2, 2.9) -- (-2, 2.1);
\draw[red] (-2, 1.9) -- (-2, 1.1);
\draw[red] (-2, 0.9) -- (-2, 0.1);
\draw[red] (-2, -0.1) -- (-2, -3);

\draw[red] (-1, 2.9) -- (-1, 2.1);
\draw[red] (-1, 1.9) -- (-1, 1.1);
\draw[red] (-1, 0.9) -- (-1, 0.1);
\draw[red] (-1, -0.1) -- (-1, -3);

\draw[red] (0, 2.9) -- (0, 2.1);
\draw[red] (0, 1.9) -- (0, 1.1);
\draw[red] (0, 0.9) -- (0, 0.1);
\draw[red] (0, -0.1) -- (0, -3);

\draw[red] (1, 2.9) -- (1, 2.1);
\draw[red] (1, 1.9) -- (1, 1.1);
\draw[red] (1, 0.9) -- (1, 0.1);
\draw[red] (1, -0.1) -- (1, -3);

\draw[red] (2, 2.9) -- (2, 2.1);
\draw[red] (2, 1.9) -- (2, 1.1);
\draw[red] (2, 0.9) -- (2, 0.1);
\draw[red] (2, -0.1) -- (2, -3);

\filldraw[red] (-3, -1) circle (2pt);
\filldraw[red] (-2, -1) circle (2pt);
\filldraw[red] (-1, -1) circle (2pt);
\filldraw[red] (0, -1) circle (2pt);
\filldraw[red] (1, -1) circle (2pt);
\filldraw[red] (2, -1) circle (2pt);
\filldraw[red] (-3, -2) circle (2pt);
\filldraw[red] (-2, -2) circle (2pt);
\filldraw[red] (-1, -2) circle (2pt);
\filldraw[red] (0, -2) circle (2pt);
\filldraw[red] (1, -2) circle (2pt);
\filldraw[red] (2, -2) circle (2pt);
\filldraw[red] (-3, -3) circle (2pt);
\filldraw[red] (-2, -3) circle (2pt);
\filldraw[red] (-1, -3) circle (2pt);
\filldraw[red] (0, -3) circle (2pt);
\filldraw[red] (1, -3) circle (2pt);
\filldraw[red] (2, -3) circle (2pt);

\draw[dashed] (-3, 3) -- (3, 3) -- (3, -3);
\draw[dashed] (-3, 2) -- (3, 2);
\draw[dashed] (-3, 1) -- (3, 1);
\draw[dashed] (-3, 0) -- (3, 0);

\draw (3.7, 3) node {$(3, 3)$};
\draw (-3.7, 3) node {$(-3, 3)$};
\draw (3.7, -3) node {$(3, -3)$};
\draw (-3.7, -3) node {$(-3, -3)$};

\draw[red] (-2, 2.9) -- (-3, 2.1);
\draw[red] (-2, 1.9) -- (-3, 1.1);
\draw[red] (-2, 0.9) -- (-3, 0.1);
\draw[red] (-2, -0.1) -- (-3, -1);

\draw[red] (-1, 2.9) -- (-2, 2.1);
\draw[red] (-1, 1.9) -- (-2, 1.1);
\draw[red] (-1, 0.9) -- (-2, 0.1);
\draw[red] (-1, -0.1) -- (-2, -1);

\draw[red] (0, 2.9) -- (-1, 2.1);
\draw[red] (0, 1.9) -- (-1, 1.1);
\draw[red] (0, 0.9) -- (-1, 0.1);
\draw[red] (0, -0.1) -- (-1, -1);

\draw[red] (1, 2.9) -- (0, 2.1);
\draw[red] (1, 1.9) -- (0, 1.1);
\draw[red] (1, 0.9) -- (0, 0.1);
\draw[red] (1, -0.1) -- (0, -1);

\draw[red] (2, 2.9) -- (1, 2.1);
\draw[red] (2, 1.9) -- (1, 1.1);
\draw[red] (2, 0.9) -- (1, 0.1);
\draw[red] (2, -0.1) -- (1, -1);

\draw[red] (3, 2.9) -- (2, 2.1);
\draw[red] (3, 1.9) -- (2, 1.1);
\draw[red] (3, 0.9) -- (2, 0.1);
\draw[red] (3, -0.1) -- (2, -1);

\draw[red] (-2, -1) -- (-3, -2);
\draw[red] (-1, -1) -- (-3, -3);
\draw[red] (0, -1) -- (-2, -3);
\draw[red] (1, -1) -- (-1, -3);
\draw[red] (2, -1) -- (0, -3);
\draw[red] (3, -1) -- (1, -3);
\draw[red] (3, -2) -- (2, -3);

\end{tikzpicture}
\end{center}

\noindent The $f^*$-vector of this triangulation is $f^* = (18, 90, 72)$. Therefore, by Theorem \ref{unimodular},

\begin{equation*}
L_{(1, 1)}(g + m) = \left| gP_{(1, 1)} \cap \mathbb{Z}^2 \right|
\end{equation*}

\noindent Alternatively, consider the inside-out polytope $\widetilde{P_{(1, 1)}} := \left([0, 2] \times [0, 1]\right) \setminus \widetilde{\mathcal{H}}_{(1, 1)} \subset \mathbb{R}^2$, where $\widetilde{\mathcal{H}}_{(1, 1)}$ is the hyperplane arrangement given by

\begin{equation*}
\widetilde{\mathcal{H}}_{(1, 1)} := \{x_2 = 1, x_1 = 1, x_1 = 2\}
\end{equation*} 

\noindent Here is a visualization of $\widetilde{P_{(1, 1)}}$:

\begin{center}
\begin{tikzpicture}

\draw (0, 0) -- (0.9, 0);
\draw (1.1, 0) -- (1.9, 0);
\draw (0, 0) -- (0, 0.9);

\draw[dashed] (2, 0.1) -- (2, 0.9);
\draw[dashed] (1.9, 1) -- (0.1, 1);
\draw[dashed] (1, 0.1) -- (1, 1);

\filldraw (0, 0) circle (2pt);
\draw (1, 0) circle (2pt);
\draw (2, 0) circle (2pt);
\draw (2, 1) circle (2pt);
\draw (0, 1) circle (2pt);

\draw (-0.5, 0) node {$(0, 0)$};

\end{tikzpicture}
\end{center}

\noindent The claim is that

\begin{equation*}
L_{(1, 1)}(g) = 18 \left| g\left(P_{(1, 1)}\right) \cap \mathbb{Z}^2 \right|
\end{equation*}

\noindent Indeed, $P_{(1, 1)}$ admits a unimodular triangulation as suggested below:

\begin{center}
\begin{tikzpicture}

\draw[color = red] (0, 0) -- (0.9, 0);
\draw[color = red] (1.1, 0) -- (1.9, 0);
\draw[color = red] (0, 0) -- (0, 0.9);
\draw[color = red] (0, 0) -- (1, 1);
\draw[color = red] (1.05, 0.05) -- (1.95, 0.95);

\draw[dashed] (2, 0.1) -- (2, 0.9);
\draw[dashed] (1.9, 1) -- (0.1, 1);
\draw[dashed] (1, 0.1) -- (1, 1);

\filldraw[color = red] (0, 0) circle (2pt);
\draw (1, 0) circle (2pt);
\draw (2, 0) circle (2pt);
\draw (2, 1) circle (2pt);
\draw (0, 1) circle (2pt);

\end{tikzpicture}
\end{center}

\noindent Since the $f^*$-vector of this triangulation is $(1, 5, 4)$, the claim follows from Theorem \ref{unimodular}. \\

\noindent Now consider the vector $\vec{d} = (2)$. We have already computed $L_{(2)}(g + m) = L_{(2)}(g)$ in the proof of Corollary \ref{fstarnonnegative}:

\begin{align*}
L_{(2)}(g + m) = L_{(2)}(g) & = 15{g - 1 \choose 0} + 72{g - 1 \choose 1} + 72{g - 1 \choose 2} \\
& = 3\left[ 5{g - 1 \choose 0} + 24{g - 1 \choose 1} + 24{g - 1 \choose 2} \right] 
\end{align*}

\noindent Consider the inside-out polytope given by

\begin{equation*}
P_{(2)} := \left( [-3, 3] \times [-3, 3] \right) \setminus \mathcal{H}_{(2)} \subset \mathbb{R}^2
\end{equation*}

\noindent where the hyperplane arrangement $\mathcal{H}_{(2)}$ is given by

\begin{equation*}
\mathcal{H}_{(2)} := \{x_1 = \pm 3, x_2 = \pm 3, x_2 = \pm 2, x_1 \pm x_2 = \pm 4, x_1 \pm x_2 = \pm 5 \} 
\end{equation*}

\noindent Here is a visualization of $P_{(2)}$:

\begin{center}
\begin{tikzpicture}

\draw[dashed] (3, 3) -- (-3, 3) -- (-3, -3) -- (3, -3) -- (3, 3);
\draw[dashed] (-3, 2) -- (3, 2);
\draw[dashed] (-3, -2) -- (3, -2);
\draw[dashed] (-2, 3) -- (-3, 2);
\draw[dashed] (-1, 3) -- (-3, 1);
\draw[dashed] (2, 3) -- (3, 2);
\draw[dashed] (1, 3) -- (3, 1);
\draw[dashed] (-3, -2) -- (-2, -3);
\draw[dashed] (-3, -1) -- (-1, -3);
\draw[dashed] (3, -2) -- (2, -3);
\draw[dashed] (3, -1) -- (1, -3);

\end{tikzpicture}
\end{center}

\noindent $P_{(2)}$ admits a unimodular triangulation as suggested below:

\begin{center}
\begin{tikzpicture}

\draw[dashed] (3, 3) -- (-3, 3) -- (-3, -3) -- (3, -3) -- (3, 3);
\draw[dashed] (-3, 2) -- (3, 2);
\draw[dashed] (-3, -2) -- (3, -2);
\draw[dashed] (-2, 3) -- (-3, 2);
\draw[dashed] (-1, 3) -- (-3, 1);
\draw[dashed] (2, 3) -- (3, 2);
\draw[dashed] (1, 3) -- (3, 1);
\draw[dashed] (-3, -2) -- (-2, -3);
\draw[dashed] (-3, -1) -- (-1, -3);
\draw[dashed] (3, -2) -- (2, -3);
\draw[dashed] (3, -1) -- (1, -3);

\filldraw[red] (-2, 1) circle (2pt);
\filldraw[red] (-1, 1) circle (2pt);
\filldraw[red] (0, 1) circle (2pt);
\filldraw[red] (1, 1) circle (2pt);
\filldraw[red] (2, 1) circle (2pt);

\filldraw[red] (-2, 0) circle (2pt);
\filldraw[red] (-1, 0) circle (2pt);
\filldraw[red] (0, 0) circle (2pt);
\filldraw[red] (1, 0) circle (2pt);
\filldraw[red] (2, 0) circle (2pt);

\filldraw[red] (-2, -1) circle (2pt);
\filldraw[red] (-1, -1) circle (2pt);
\filldraw[red] (0, -1) circle (2pt);
\filldraw[red] (1, -1) circle (2pt);
\filldraw[red] (2, -1) circle (2pt);

\draw[red] (-2, 2.9) -- (-2, 2.1);
\draw[red] (-2, 1.9) -- (-2, -1.9);
\draw[red] (-2, -2.1) -- (-2, -2.9);

\draw[red] (-1, 2.9) -- (-1, 2.1);
\draw[red] (-1, 1.9) -- (-1, -1.9);
\draw[red] (-1, -2.1) -- (-1, -2.9);

\draw[red] (0, 2.9) -- (0, 2.1);
\draw[red] (0, 1.9) -- (0, -1.9);
\draw[red] (0, -2.1) -- (0, -2.9);

\draw[red] (1, 2.9) -- (1, 2.1);
\draw[red] (1, 1.9) -- (1, -1.9);
\draw[red] (1, -2.1) -- (1, -2.9);

\draw[red] (2, 2.9) -- (2, 2.1);
\draw[red] (2, 1.9) -- (2, -1.9);
\draw[red] (2, -2.1) -- (2, -2.9);

\draw[red] (0, 2.9) -- (-1, 2.1);
\draw[red] (1, 2.9) -- (0, 2.1);

\draw[red] (-1, 1.9) -- (-2, 1);
\draw[red] (0, 1.9) -- (-1, 1);
\draw[red] (1, 1.9) -- (0, 1);
\draw[red] (2, 1.9) -- (1, 1);

\draw[red] (-2, 1) -- (-3, 0);
\draw[red] (-1, 1) -- (-2, 0);
\draw[red] (0, 1) -- (-1, 0);
\draw[red] (1, 1) -- (0, 0);
\draw[red] (2, 1) -- (1, 0);
\draw[red] (3, 1) -- (2, 0);

\draw[red] (-2, 0) -- (-3, -1);
\draw[red] (-1, 0) -- (-2, -1);
\draw[red] (0, 0) -- (-1, -1);
\draw[red] (1, 0) -- (0, -1);
\draw[red] (2, 0) -- (1, -1);
\draw[red] (3, 0) -- (2, -1);

\draw[red] (-1, -1) -- (-2, -2);
\draw[red] (0, -1) -- (-1, -2);
\draw[red] (1, -1) -- (0, -2);
\draw[red] (2, -1) -- (1, -2);

\draw[red] (0, -2.1) -- (-1, -2.9);
\draw[red] (1, -2.1) -- (0, -2.9);

\draw[red] (-3, 1) -- (3, 1);
\draw[red] (-3, 0) -- (3, 0);
\draw[red] (-3, -1) -- (3, -1);

\end{tikzpicture}
\end{center}

\noindent The $f^*$ vector of this triangulation is $f^* = (15, 72, 72)$, so by Theorem \ref{unimodular},

\begin{equation*}
L_{(2)}(g + m) = \left|.gP_{(2)} \cap \mathbb{Z}^2 \right|
\end{equation*}

\noindent Alternatively, consider the inside-out polytope $\widetilde{P_{(2)}} := ([-3, 3] \times [-1, 1]) \setminus \widetilde{\mathcal{H}}_{(2)}$, where $\widetilde{\mathcal{H}}_{(2)}$ is the hyperplane arrangement given by

\begin{equation*}
\widetilde{\mathcal{H}}_{(2)} = \{x_1 = \pm 1, x_2 = \pm 3, x_1 \pm x_2 = \pm 3\}
\end{equation*}

\noindent Here is a visualization of $\widetilde{P_{(2)}}$:

\begin{center}
\begin{tikzpicture}

\draw[dashed] (-3, 1) -- (-3, -1) -- (3, -1) -- (3, 1) -- (-3, 1);
\draw[dashed] (-2, 1) -- (-3, 0) -- (-2, -1);
\draw[dashed] (2, 1) -- (3, 0) -- (2, -1);

\draw (3.5, 1) node {$(3, 1)$};
\draw (-3.7, 1) node {$(-3, 1)$};
\draw (-3.8, -1) node {$(-3, -1)$};
\draw (3.7, -1) node {$(3, -1)$};

\end{tikzpicture}
\end{center}

\noindent The claim is that

\begin{equation*}
L_{(2)}(g + m) =  3\left| g\widetilde{P_{(2)}} \cap \mathbb{Z}^2  \right|
\end{equation*}

\noindent Indeed, $\widetilde{P_{(2)}}$ admits a unimodular triangulation as suggested below:

\begin{center}
\begin{tikzpicture}

\draw[dashed] (-3, 1) -- (-3, -1) -- (3, -1) -- (3, 1) -- (-3, 1);
\draw[dashed] (-2, 1) -- (-3, 0) -- (-2, -1);
\draw[dashed] (2, 1) -- (3, 0) -- (2, -1);

\filldraw[color = red] (-2, 0) circle (2pt);
\filldraw[color = red] (-1, 0) circle (2pt);
\filldraw[color = red] (0, 0) circle (2pt);
\filldraw[color = red] (1, 0) circle (2pt);
\filldraw[color = red] (2, 0) circle (2pt);

\draw[color = red] (-3, 0) -- (3, 0);
\draw[color = red] (-2, 1) -- (-2, -1);
\draw[color = red] (-1, 1) -- (-1, -1);
\draw[color = red] (0, 1) -- (0, -1);
\draw[color = red] (1, 1) -- (1, -1);
\draw[color = red] (2, 1) -- (2, -1);

\draw[color = red] (-2, 0) -- (-1, -1);
\draw[color = red] (1, 1) -- (2, 0);

\draw[color = red] (-2, 1) -- (0, -1);
\draw[color = red] (-1, 1) -- (1, -1);
\draw[color = red] (0, 1) -- (2, -1);

\end{tikzpicture}
\end{center}

\noindent The $f^*$-vector of this triangulation is $(5, 24, 24)$, so the claim follows from Theorem \ref{unimodular}. \\

\section{What Next?}\label{Future}

There are many basic open questions that seem natural in light of Theorem \ref{MainTheorem} and the computations provided in Section \ref{Examples}.  \\

\begin{Question}

\noindent Let $\vec{d} \in \mathbb{Z}^n_{\geq 0}$, and let $f^* = (f_0^*, \ldots, f_{|\vec{d}|}^*)$ be the $f^*$-vector of $L_{\vec{d}}(g + m)$. If $n(\vec{d}) := \text{gcd}(f_0^*, \ldots, f_{|\vec{d}|}^*)$, is it always possible to find an inside-out polytope $P_{\vec{d}}$ of dimension $|\vec{d}|$ such that 

\begin{equation*}
L_{\vec{d}}(g + m(\vec{d})) = n(\vec{d})L_{P_{\vec{d}}}(g) \ ?
\end{equation*}

\end{Question}

\begin{Question}

\noindent Besides $\psi$-classes, one can consider other tautological classes,

\begin{equation*}
\lambda_i, \kappa_i, \delta_{j, k} \in R^*(\mbar_{g, n})
\end{equation*}

\noindent Does an Ehrhart phenomenon still occur if we allow for the insertions of these classes as well? \\

\end{Question}

\begin{Question}

\noindent The space $\mbar_{g, n}$ can be viewed as the moduli stack of stable maps to a point. So one way to generalize is to consider the stack of stable maps to a smooth projective variety $X$,

\begin{equation*}
\mbar_{g, n}(X, d)
\end{equation*}

\noindent and try to play the same game: does an Ehrhart phenomenon still occur when considering descendent integrals on $\mbar_{g, n}(X, d)$?

\end{Question}

\bibliographystyle{alpha}
\bibliography{bibliography}

\end{document}